\documentclass[a4paper,11pt]{article}
\usepackage{url}
\usepackage[T1]{fontenc} 
\usepackage{amsmath,amsthm}
\usepackage[francais,english]{babel}
\usepackage[dvips]{graphicx}
\usepackage{todonotes}
\usepackage{color}
\usepackage{tikz}
\usetikzlibrary{arrows,positioning,shapes,matrix,backgrounds}
\usetikzlibrary{fit,calc,decorations.pathreplacing}

\usepgflibrary{arrows}
\usepackage{soul}
\usepackage{amsfonts,amssymb}
\usepackage{geometry}
\geometry{a4paper,hmargin=1in, vmargin=1in }
\usepackage{hyperref}
\usepackage{stmaryrd}
\usepackage{fancyhdr}
\usepackage{url}
\usepackage{todonotes}
\usepackage{dsfont}
\usepackage[utf8]{inputenc} 
\usepackage{enumerate}

\theoremstyle{definition}

\newtheorem*{thm*}{Theorem}

\newtheorem{prop}{Proposition}[section]

\newtheorem{lemma}[prop]{Lemma}

\newtheorem{thm}[prop]{Theorem}

\newtheorem{notation}[prop]{Notation}
\newtheorem{corollary}[prop]{Corollary}

\newtheoremstyle{pourlesremarques}{\topsep}{\topsep}{\normalfont}{}{\bfseries}{.}{ }{}
\theoremstyle{pourlesremarques}

\newtheorem*{rem*}{Remark}
\newtheoremstyle{pourlesexemples}{\topsep}{\topsep}{\normalfont}{}{\bfseries}{.}{ }{}
\theoremstyle{pourlesexemples}

\def\presuper#1#2%
  {\mathop{}%
   \mathopen{\vphantom{#2}}^{#1}%
   \kern-\scriptspace%
   #2}

\newcommand{\Ad}{{\operatorname{Ad}}}
\newcommand{\lZ}{\mathcal{Z}}

\def\Irr{\operatorname{Irr}}



\newcommand{\Ind}{\operatorname{Ind}}

\newcommand{\ind}{\operatorname{ind}}
\newcommand{\K}{\mathrm{K}}

\newcommand{\Hom}{\operatorname{Hom}}

\newcommand{\G}{\mathrm{G}}

\newcommand{\dist}{\operatorname{-dist}}

\newcommand{\Res}{\operatorname{Res}}

\newcommand{\Ker}{\mathrm{Ker}}


%

\newcommand{\w}{\varpi}
\renewcommand{\d}{\delta}
\newcommand{\R}{\mathbb{R}}

\newcommand{\V}{\mathrm{V}}
\newcommand{\C}{\mathbb{C}}

\newcommand{\Ze}{\mathrm{Z}}

\newcommand{\op}{\mathrm{op}}
\newcommand{\Q}{\mathbb{Q}}

\renewcommand{\H}{\mathrm{H}}
\renewcommand{\L}{\mathrm{L}}
\newcommand{\Z}{\mathbb{Z}}

\newcommand{\1}{\mathbf{1}}

\newcommand{\U}{\mathrm{U}}

\def\lN{\mathcal{N}}
\def\\Hom{\operatorname{\Hom}}
\def\Irr{\operatorname{Irr}}

\def\dim{\operatorname{dim}}

\def\Vect{\operatorname{Vect}}

\def\leq{\leqslant}
\def\geq{\geqslant}

\def\Res{\operatorname{Res}}

\def\presuper#1#2%
  {\mathop{}%
   \mathopen{\vphantom{#2}}^{#1}%
   \kern-\scriptspace%
   #2}
\setlength\parindent{0pt}
\setlength\parskip{10pt}
\makeatletter
\DeclareRobustCommand{\rvdots}{%
  \vbox{
    \baselineskip4\p@\lineskiplimit\z@
    \kern-\p@
    \hbox{.}\hbox{.}\hbox{.}  \hbox{.}
  }}
\makeatother
\makeatletter
\newcommand{\address}[1]{\gdef\@address{#1}}
\newcommand{\email}[1]{\gdef\@email{\url{#1}}}
\newcommand{\@endstuff}{\par\vspace{\baselineskip}\noindent\small
\begin{tabular}{@{}l}\scshape\@address\\\textit{E-mail address:} \@email\end{tabular}}
\AtEndDocument{\@endstuff}
\makeatother

\title{Distinction for unipotent $p$-adic groups}
\author{Nadir Matringe}

\address{Universit\'e de Poitiers, Laboratoire de Math\'ematiques et Applications,\\
T\'el\'eport 2 - BP 30179, Boulevard Marie et Pierre Curie,\\
86962, Futuroscope Chasseneuil Cedex. France.} 
\email{nadir.matringe@math.univ-poitiers.fr}
\begin{document}
\maketitle

\begin{abstract}
Let $F$ be a $p$-adic field and $\boldsymbol{\U}$ be a unipotent group defined over $F$, and set 
$\U=\boldsymbol{\U}(F)$. Let $\sigma$ be an involution of $\boldsymbol{\U}$ defined over $F$. Adapting the arguments of Yves Benoist (\cite{B1}, \cite{B2}) in the real case, we prove the following result: an irreducible representation $\pi$ of $\U$ is $\U^{\sigma}$-distinguished if and only if it is $\sigma$-self-dual and in this case $\Hom_{\U^\sigma}(\pi,\C)$ has dimension one. 
When $\sigma$ is a Galois involution these results imply a bijective correspondence between the set $\Irr(\U^\sigma)$ of isomorphism classes of irreducible representations of 
$\U^\sigma$ and the set $\Irr_{\U^\sigma-\mathrm{dist}}(\U)$ of isomorphism classes of distinguished irreducible representations of $\U$. 
\end{abstract}

\section{Introduction}

Let $\mathbf{G}$ be a connected algebraic group defined over a field $F$, and $\sigma$ be an $F$-rational involution of $\G$. One says that a complex representation $\pi$ of $\G=\mathbf{G}(F)$ is distinguished if $\Hom_{\G^\sigma}(\pi,\C)\neq 0$. One is in general interested in computing the dimension of $\Hom_{\G^\sigma}(\pi,\C)$ when $\pi$ is irreducible, as well understanding the relation between irreducible distinction and conjugate self-duality.

One extensively studied situation is that of distinction by a Galois involution. Let $E/F$ be a separable extension of quadratic field, take 
$\mathbf{G}=\Res_{E/F}(\mathbf{H})$ for $\mathbf{H}$ be a connected algebraic group defined over $F$. Then $\sigma$ is taken to be the corresponding Galois involution. A case of interest is that 
of finite fields, in which case 
it has been shown in \cite[Theorem 2]{Pcomp} that an irreducible representation $\pi$ of $\G$ which is stable is distinguished if and only if it is conjugate self-dual: $\pi^\vee\simeq \pi^\sigma$. 

 The question of the 
relation between distinction and conjugate self-duality as well as that of the dimension of $\Hom_{\H}(\pi,\C)$ remains interesting for smooth representations when $F$ is $p$-adic, and it has attracted a lot of attention when 
$\mathbf{G}$ is reductive. The answer is not known in general, but a conjectural and very precise answer in terms of Langlands parameters is provided by \cite{Pconj}. 
It in particular roughly says that if $\pi$ is an irreducible distinguished (by a certain quadratic character) representation of $\G$, 
then $\pi^\vee$ and $\pi^\sigma$ should be in the same $\L$-packet, and moreover there should be a correspondence 
between irreducible distinguished representations 
of $\G$ and irreducible representations of $\mathbf{H}^{\op}(F)$ where the opposition group $\mathbf{H}^{\op}$ 
is a certain reductive group defined over $F$ and isomorphic to $\mathbf{H}$ over $E$. 

Going back to a general involution, still with $F$ a $p$-adic field, it seems that such questions have not attracted as much attention when $\mathbf{G}$ is unipotent. 
It turns out that the different answers, provided by this paper, are simple as well as their proofs. In fact they were completely solved when $F=\R$ by Y. Benoist in \cite{B1} and
\cite{B2}, 
where moreover a Plancherel formula for the corresponding symmetric space was established. Our results are the same, and the proofs are very close though sometimes the arguments have to be different. Let us quickly describe the content of this note.

If $\boldsymbol{\G}=\boldsymbol{\U}$ is unipotent, then a smooth irreducible representation of $\U=\boldsymbol{\U}(F)$ is distinguished if and only if it is conjugate self-dual, in which case 
$\Hom_{\U^\sigma}(\pi,\C)$ has dimension one (Proposition \ref{prop mult 1} and Theorem \ref{thm distinction and sel-duality}). Moreover when $\sigma$ is a Galois involution, there is a bijective correspondence 
between distinguished irreducible representations of $\U$ and representations of 
$\U^\sigma$ (Corollary \ref{cor correspondence}). Hence, setting $\mathbf{H}=\mathbf{U}$, in a certain sense 
$\boldsymbol{\mathbf{U}}^{\op}=\boldsymbol{\mathbf{U}}$ when $\mathbf{U}$ is unipotent.

As in \cite{B1} and \cite{B2}, all proofs are based on the Kirillov construction and parametrization (\cite{K}, \cite{VD}) of irreducible representations of $\U$. In fact as the 
Kirillov construction in the case of smooth irreducible representations of $p$-adic fields seems not to be fully written in details in the litterature, 
we do this work in Section \ref{section kirillov} for the convenience of the reader. Note that the classification for continuous irreducible unitary representations of $\U$ 
on Hilbert spaces is available in several papers (see \cite{EK} and the references there), so that the Kirillov classification of smooth irreducible unitary representations of $\U$ can certainly be deduced 
from it by conisdering the injection of this category into that of irreducible unitary representations on Hilbert spaces by taking smooth vectors (though 
we could not find a proof of this result in the case at hand), but in any case we give a direct proof here, for which we claim no originality other 
than that we did not find it written as such in the litterature. We make use of a result of \cite{VD}, which is 
very well suited to obtain Kirillov's classification in a quick manner.

\section{Notations}

In this paper $F$ is a $p$-adic field, i.e. a finite extension of $\Q_p$, with ring of integers $O_F$ and uniformizer $\w_F$. We consider $\mathbf{U}$ a (necessarily connected) unipotent 
group defined over $F$. We denote by $\mathbf{U}$ a connected 
unipotent group defined over $F$ with 
Lie algebra $\boldsymbol{\mathcal{N}}$ so that \[\exp:\boldsymbol{\mathcal{N}}\rightarrow \mathbf{U}\] is an isomorphism of algebraic $F$-varieties with reciprocal map $\ln$ 
(\cite[Proposition 4.1]{DG}).

We set 
$\U=\mathbf{U}(F)$ and $\lN=\boldsymbol{\mathcal{N}}(F)$, the map $\exp$ restricts as a homeomorphism from 
$\lN$ to $\U$. We will say that $\U'$ is an $F$-subgroup of $\U$ if it is the $F$-points of a closed algebraic subgroup $\mathbf{U'}$ of $\mathbf{U}$ defined over $F$. The map $\exp$ 
induces a bijection between Lie sub-algebras of $\lN$ (resp. $\mathbf{\lN}$) and the $F$-subgroups of $\U$ (resp. $\mathbf{U}$), for which ideals correspond to normal subgroups. Moreover if $\mathbf{U}'$ is an $F$-subgroup of 
$\mathbf{U}$ then $\U/ \U'\simeq (\mathbf{U}/\mathbf{U'})(F)$ by \cite[14.2.6]{Spr}, and this bijection becomes a group isomorphism if $\mathbf{U'}$ is normal in $\mathbf{U}$ in which case both quotients identify to 
$\lN/\lN'= (\boldsymbol{\lN}/\boldsymbol{\lN'})(F)$ via $\exp$.  

We denote by $\lZ$ the center of $\lN$, and by $\Ze$ the center $\exp(\mathcal{Z})$ of $\U$.

As a convention if $\U_i$ or $\U'$ is an $F$-subgroup of $\U$, we will denote by $\lN_i$ or $\lN'$ its Lie algebra. 

A fundamental example of unipotent group is the Heisenberg group \[\U=\H_3=\{h(x,y,z):=\begin{pmatrix} 1 & x & z \\ & 1 & y\\ & & 1\end{pmatrix}, \ x, \ y, \ z \in F\}.\] 
We will denote by \[\L=\{h(0,y,z), \ y, \ z \in F\}\] its normal Lagrangian subgroup $\H_3$.

We denote by $\Irr(\U)$ the set of isomorphism classes of (always smooth) irreducible representations of 
$\U$ and by $\Irr_{\U^\sigma-\mathrm{dist}}(\U)$ the subset of isomorphism classes of distinguished irreducible representations of $\U$. For $\pi\in \Irr(\U)$ we will write $c_\pi$ its central character. We will say that a representation is unitary if it preserves a positive definite hermitian form. We write $\ind$ for compact induction and $\Ind$ for induction 
(in our situation normalized induction will coincide with non-normalized induction). We recall that if $\pi'$ is a smooth representation of a closed subgroup $\U'$ of $\U$, then if $\ind_{\U'}^{\U}(\pi)$ is admissible we have 
$\ind_{\U'}^{\U}(\pi)=\Ind_{\U'}^{\U}(\pi)$.

\section{The Kirillov classification}\label{section kirillov}

\subsection{Definitions}

In this section we fix $\psi:F\rightarrow \C_u$ a non trivial character. Take $\phi\in \lN^*=\Hom_{F}(\lN,F)$ and let $\lN'$ be a Lie sub-algebra of 
$\lN$, we will say that the pair $(\phi,\lN')$ is \emph{polarized} for $\lN$ if $\lN'$ is a totally isotropic sub-space of maximal dimension for the $F$-bilinear form
\[B_\phi:\lN\times \lN\rightarrow F\] defined by \[B_\phi(X,Y)=\phi([X,Y]).\] 
We denote by $\mathcal{P}(\lN)$ the set of polarized pairs for $\lN$. 
The group $\U$ acts on $\mathcal{P}(\lN)$ by the formula 
\[u.(\phi,\lN')=(\phi\circ \Ad(u)^{-1}, \Ad(u)(\lN')).\] More generally it acts by the same formula on the set of 
pairs $(\phi,\lN')$ where $\phi$ is a linear form on $\lN$ and $\lN'$ is a Lie sub-algebra (or even a vector subspace) of $\lN$.

Whether $(\phi,\lN')$ is polarized or not, as soon as $\lN'$ is totally isotropic for $B_\phi$, the linear form $\phi$ defines a character 
$\psi_\phi$ of $\U':=\exp(\lN')$  
given by \[\psi_\phi(u')=\psi(\phi(\ln(u'))).\] 
We set \[\pi(\U',\U,\psi_\phi)=\ind_{\U'}^{\U}(\psi_\phi).\] Note that 
if $(\phi,\lN')$ and $(\phi',\lN'')$ are in the same $\U$-orbit, then the inducing data 
$(\psi_\phi,\U')$ and $(\psi_{\phi'},\U'')$ are conjugate and 
\[\pi(\U',\U,\psi_\phi)\simeq \pi(\U'',\U,\psi_{\phi'}).\]
 The author of \cite{VD} notices in \cite[Section 6]{VD} 
that the results of \cite{K} on unitary representations of real unipotent groups apply with the same proofs to 
unitary representations (acting on Hilbert spaces) of unipotent $p$-adic groups. They also apply to smooth representations of unipotent $p$-adic groups with the same proofs. For the sake of completeness we will recall the proofs, using handy results from \cite[Proof of Theorem 4]{VD}. 

\subsection{Preparation}

In this paragraph we suppose that $\lZ$ is of dimension $1$. By Kirillov's lemma (\cite[Lemma 4.1]{K}) there is a "canonical" decomposition
\[\lN=F.X\oplus F.Y\oplus F.Z \oplus W\] which means that the vectors 
$X$, $Y$, $Z$ and the $F$-vector space $W$ have the following properties:

\begin{enumerate}
\item $\lZ= F.Z$.
\item $[X,Y]=Z$.
\item $[Y,W]=\{0\}$.
\end{enumerate}

The Lie sub-algebra \[\lN_0:=F.Y \oplus F.Z \oplus W\] is automatically a codimension $1$ ideal of $\lN$ and we set \[\U_0=\exp(\lN_0).\] 

Note that $\Vect_F(X,Y,Z)$ is a Lie algebra isomorphic to that of $\H_3$, hence $\exp(\Vect_F(X,Y,Z))$ is a closed subgroup of $\U$ isomorphic to $\H_3$. We set 
\[h(x,y,z)=\exp(y.Y)\exp(x.X)\exp(z.Z)\] and use $h$ to consider $\H_3$ as a subgroup of $\U$ which satisfies 
\[\H_3\cap \U_0=\L.\]

We note that $Y$ and $Z$ are central in $\lN_0$ hence they belong to $\lN'$ whenever $(\phi,\lN')\in \mathcal{P}(\lN_0)$. 

By \cite[Proof of Theorem 4]{VD} we have:

\begin{prop}\label{prop of VD}
 Let $\pi$ be an irreducible representation of $\U$ with non trivial central character $c_\pi$, then there is $\pi_0\in \Irr(\U_0)$ such that \[\pi=\ind_{\U_0}^{\U}(\pi_0).\] 
  In fact one can choose $\pi_0$ such that if we identify the space of $\pi$ with $\mathcal{C}_c^\infty(F,V_{\pi_0})$ via the map 
$f\mapsto [x\mapsto f(h(x,0,0))]$, setting $\chi(z)=c_\pi(h(0,0,z))$, we have 
\begin{equation}\label{equation action of pi0} (\pi(u_0)f)(0)=\pi_0(u_0)f(0)\end{equation} for any $u_0\in U_0$ and 
\begin{equation}\label{equation action de H3} (\pi(h(x,y,z))f)(x')=\chi(z+x'y)f(x'+x).\end{equation} 
\end{prop}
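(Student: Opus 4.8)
The plan is to run Kirillov's inductive step explicitly, treating $\U$ as the semidirect product $\U=\exp(F.X)\ltimes \U_0$ (recall $\U_0$ is normal of codimension $1$ and $\exp(F.X)\cap\U_0=\{1\}$). First I would record the Heisenberg relations carried by $V_\pi$: since $[X,Y]=Z$ with $Z$ central, a Baker--Campbell--Hausdorff computation gives $\exp(xX)\exp(yY)\exp(-xX)=\exp(yY)\exp(xyZ)$, hence $\pi(\exp(xX))\pi(\exp(yY))=\chi(xy)\,\pi(\exp(yY))\pi(\exp(xX))$ together with $\pi(\exp(zZ))=\chi(z)\,\mathrm{Id}$, where $\chi$ is nontrivial by hypothesis. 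Because $Y$ is central in $\lN_0$, the group $\exp(F.Y)$ is central in $\U_0$; I would then define $\pi_0$ to be the $\exp(F.Y)$-coinvariants of $\pi$, i.e. $V_{\pi_0}=V_\pi/V_\pi(\exp(F.Y))$ with $V_\pi(\exp(F.Y))$ spanned by the vectors $\pi(\exp(yY))v-v$. This is a smooth representation of $\U_0$ on which, by construction, $\exp(F.Y)$ acts trivially and $\exp(F.Z)$ acts by $\chi$; write $p\colon V_\pi\to V_{\pi_0}$ for the projection.

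Next I would build the candidate isomorphism $\Phi\colon V_\pi\to \Ind_{\U_0}^{\U}\pi_0$ by the coinvariant matrix coefficient $\Phi(v)(g)=p(\pi(g)v)$; the definition of $\pi_0$ makes $\Phi$ left $\U_0$-equivariant and it is tautologically right $\U$-equivariant. Using the section $x\mapsto \exp(xX)$ of $\U_0\backslash\U\cong F$, I identify $\Phi(v)$ with the function $x'\mapsto \Phi(v)(\exp(x'X))=p(\pi(\exp(x'X))v)$. The key point is compact support: if $v$ is fixed by $\exp(\p^N Y)$, then $\pi(\exp(x'X))v$ is fixed by $\exp(x'X)\exp(\p^N Y)\exp(-x'X)=\{\exp(yY)\exp(x'yZ):y\in\p^N\}$, so $\exp(\p^N Y)$ acts on it through the character $y\mapsto\chi(x'y)$; once $|x'|$ is large this character is nontrivial on $\p^N$ (here $\chi\neq 1$ is essential), so the Jacquet integral $\int_{\p^N}\pi(\exp(yY))\pi(\exp(x'X))v\,dy$ vanishes and $p(\pi(\exp(x'X))v)=0$. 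Smoothness gives local constancy in $x'$, so $\Phi$ lands in $\ind_{\U_0}^{\U}\pi_0\cong \mathcal{C}_c^\infty(F,V_{\pi_0})$.

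It then remains to prove $\Phi$ is an isomorphism. Injectivity is immediate from irreducibility of $\pi$ once $V_{\pi_0}\neq 0$, and for nonvanishing I would invoke the smooth Stone--von Neumann theorem for $\H_3$: any nonzero smooth representation with central character $\chi$ is a (possibly infinite) multiple of the Schr\"odinger model on $\mathcal{C}_c^\infty(F)$, whose $\exp(F.Y)$-coinvariants are one dimensional, so $V_{\pi_0}$ is exactly the nonzero multiplicity space. For surjectivity I would argue directly that the $\U$-stable image is all of $\mathcal{C}_c^\infty(F,V_{\pi_0})$: the same relations show $\exp(F.X)$ acts on $\Phi(v)$ by translation in $x'$ and $\exp(F.Y)$ by multiplication by the character $x'\mapsto\chi(x'y)$ (using that $\exp(F.Y)$ is trivial on coinvariants), while $p$ is onto $V_{\pi_0}$ at $x'=0$; a $p$-adic Fourier-inversion argument (localizing by averaging these modulations) then generates every bump function, which is just the irreducibility of the Schr\"odinger model. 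Hence $\Phi$ is an isomorphism, so $\pi\simeq\ind_{\U_0}^{\U}\pi_0$, and since $\pi$ is irreducible so is $\pi_0$, giving $\pi_0\in\Irr(\U_0)$.

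Finally the explicit formulas (\ref{equation action of pi0}) and (\ref{equation action de H3}) fall out of $\Phi$: for $u_0\in\U_0$, evaluating at $x'=0$ gives $(\pi(u_0)f)(0)=p(\pi(u_0)v)=\pi_0(u_0)f(0)$; and moving $\exp(x'X)$ past $h(x,y,z)=\exp(yY)\exp(xX)\exp(zZ)$ via the group relation $\exp(x'X)\exp(yY)=\exp(yY)\exp(x'yZ)\exp(x'X)$ yields $\exp(x'X)h(x,y,z)=\exp(yY)\exp((x'+x)X)\exp((z+x'y)Z)$, so after applying $p$ (which kills the $\exp(yY)$ factor) and using $\pi(\exp(zZ))=\chi(z)$ one gets exactly $(\pi(h(x,y,z))f)(x')=\chi(z+x'y)f(x'+x)$. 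I expect the main obstacle to be the Stone--von Neumann/Mackey step in the purely smooth, non-unitary setting with the noncompact quotient $\U_0\backslash\U\cong F$: both the compact-support estimate (which crucially exploits $\chi\neq 1$) and the surjectivity have to be done by hand through the coinvariant functor and $p$-adic Fourier analysis, since the Hilbert-space direct-integral machinery of the classical theory is unavailable.
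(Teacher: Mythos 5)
The paper does not prove this proposition at all: it is quoted verbatim from the proof of Theorem 4 of \cite{VD}, so there is no in-paper argument to compare against. Your write-up is a correct, self-contained reconstruction of exactly that induction step, with the Hilbert-space spectral theory of Kirillov's original argument replaced, as it must be in the smooth setting, by the coinvariant (Jacquet) functor along $\exp(F.Y)$, which is central in $\U_0$. The computations all check out: the commutation relation $\exp(x'X)\exp(yY)=\exp(yY)\exp(x'X)\exp(x'yZ)$ gives both the compact-support estimate (via the vanishing criterion $\int_{\p^N}\pi(\exp(yY))w\,dy=0$ for vectors transforming by a nontrivial character of $\p^N$ --- this is indeed where $\chi\neq\1$ is used) and the formulas (\ref{equation action of pi0}) and (\ref{equation action de H3}) after applying $p$; surjectivity follows from your localization argument since averaging over compact sets preserves any subrepresentation of a smooth representation; and irreducibility of $\pi_0$ follows because $\ind_{\U_0}^{\U}$ is exact and sends a nonzero proper subrepresentation of $\pi_0$ to a nonzero proper subrepresentation of $\pi$. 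The one genuinely external input is the smooth Stone--von Neumann theorem, invoked only to see $V_{\pi_0}\neq 0$; note that you do not need the full classification, only its first lemma, namely that a nonzero smooth representation of $\H_3$ with nontrivial central character has nonzero $\exp(F.Y)$-coinvariants (equivalently, a nonzero $\psi_\phi$-eigenvector for $\L$), which is proved by the same idempotent/averaging technique you already use for compact support. With that reference made precise, the proof is complete and is, in substance, the argument the paper outsources to van Dijk.
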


Note that Equation (\ref{equation action of pi0}) is automaticially satisfied when $\pi=\ind_{\U}^{\U_0}(\pi_0)$. On the other hand 
Equation (\ref{equation action de H3}) is not. One can in fact characterize the representations $\pi_0$ of $\U_0$ in the above proposition:

\begin{lemma}\label{lemma complement of VD}
The irreducible representation $\pi_0$ is such that Equation (\ref{equation action de H3}) is satisfied if and only if $c_{\pi_0}$ is trivial on $h(0,F,0)$.
\end{lemma}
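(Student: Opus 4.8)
The plan is to compute the action of $\H_3$ on $\pi=\ind_{\U_0}^{\U}(\pi_0)$ directly in the model attached to the coset representatives $h(x,0,0)=\exp(x.X)$, and to read off exactly the operator that obstructs Equation (\ref{equation action de H3}). First I would record that, since $\lN=\lN_0\oplus F.X$ with $\lN_0\cap F.X=\{0\}$, every element of $\U$ is uniquely of the form $u_0\,h(x,0,0)$ with $u_0\in\U_0$ and $x\in F$; thus $\{h(x,0,0)\}$ is a transversal for $\U_0\backslash\U$ and the map $f\mapsto[x\mapsto f(h(x,0,0))]$ identifies the space of $\ind_{\U_0}^{\U}(\pi_0)$ with $\mathcal{C}_c^\infty(F,V_{\pi_0})$, the covariance $f(u_0 g)=\pi_0(u_0)f(g)$ and the right regular action $(\pi(g)f)(g')=f(g'g)$ being as usual. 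Writing $F(x')=f(h(x',0,0))$ this gives $(\pi(h(x,y,z))F)(x')=f(h(x',0,0)\,h(x,y,z))$.

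The key computation is the multiplication $h(x',0,0)\,h(x,y,z)$ inside the Heisenberg subgroup $\exp(\Vect_F(X,Y,Z))$. Using $[X,Y]=Z$ with $Z$ central one has $\exp(x'.X)\exp(y.Y)=\exp(y.Y)\exp(x'.X)\exp(x'y.Z)$, and after commuting the central factors to the right I expect to obtain
\[h(x',0,0)\,h(x,y,z)=h(0,y,z+x'y)\,h(x'+x,0,0),\]
where $h(0,y,z+x'y)=\exp(y.Y)\exp((z+x'y).Z)\in\U_0$. Applying the covariance then yields
\[(\pi(h(x,y,z))F)(x')=\pi_0\big(h(0,y,z+x'y)\big)\,F(x'+x).\]

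Next I would factor $\pi_0(h(0,y,z+x'y))=\pi_0(h(0,y,0))\,\pi_0(h(0,0,z+x'y))$. Since $Z$ is central in $\U$ and lies in $\U_0$, the element $h(0,0,w)$ acts on $\pi$ by the same scalar as on $\pi_0$, namely $\chi(w)=c_\pi(h(0,0,w))=c_{\pi_0}(h(0,0,w))$; hence the action becomes $\chi(z+x'y)\,\pi_0(h(0,y,0))\,F(x'+x)$. Comparing with Equation (\ref{equation action de H3}) and using that $\chi$ is a nonvanishing character, I conclude that Equation (\ref{equation action de H3}) holds for all $x,y,z,x'$ if and only if $\pi_0(h(0,y,0))=\Id$ for every $y\in F$. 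Finally, because $Y$ is central in $\lN_0$, the subgroup $h(0,F,0)=\exp(F.Y)$ is central in $\U_0$, so $\pi_0(h(0,y,0))=c_{\pi_0}(h(0,y,0))\Id$; thus $\pi_0$ is trivial on $h(0,F,0)$ if and only if $c_{\pi_0}$ is trivial there, which is the claim.

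The only delicate point is the group-law identity $h(x',0,0)\,h(x,y,z)=h(0,y,z+x'y)\,h(x'+x,0,0)$: one must keep careful track of the central correction terms coming from Baker--Campbell--Hausdorff and check that the left factor genuinely lands in $\U_0$, so that the covariance of the induced model applies. Everything else is formal once this identity and the centrality of $Y$ and $Z$ in $\lN_0$ are in hand.
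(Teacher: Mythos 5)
Your proof is correct and follows essentially the same route as the paper's: both hinge on the Heisenberg group-law identity $h(x',0,0)h(x,y,z)=h(0,y,z+x'y)h(x+x',0,0)$ together with the covariance of the induced model, the paper treating the two implications separately while you package them into the single formula $(\pi(h(x,y,z))f)(x')=\chi(z+x'y)\pi_0(h(0,y,0))f(x'+x)$. Your explicit final step --- that $h(0,F,0)$ is central in $\U_0$ (since $[Y,\lN_0]=0$) so that triviality of $\pi_0$ there is equivalent, via Schur's lemma, to triviality of $c_{\pi_0}$ --- is a point the paper leaves implicit, and is a welcome addition.
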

\begin{proof}
Suppose that Equation (\ref{equation action de H3}) is satisfied. Then by Equation (\ref{equation action of pi0}) and Equation (\ref{equation action de H3}) evaluated at $x'=x=z=0$, we see that the group $h(0,E,0)$ acts trivially 
on $V_{\pi_0}$. Conversely, suppose that $h(0,E,0)$ acts trivially on $V_{\pi_0}$. Then 
\[(\pi(h(x,y,z))f)(x')=\pi(h(x',0,0)h(x,y,z))f)(0)=\pi(h(x+x',y,z+x'y))f)(0)\]
\[= \pi(h(0,y,z+yx')h(x+x',0,0))f)(0)=\pi_0(h(0,y,z+x'y))(\pi(h(x+x',0, 0))f)(0)\]
\[=\chi(z+x'y)(\pi(h(x+x',0, 0))f)(0)=\chi(z+x'y)f(x+x').\]
\end{proof}

We will say that $\pi_0\in \Irr(\U_0)$ as in Lemma \ref{lemma complement of VD} is \textit{good}. 

\subsection{Classification}

An immediate corollary of Proposition \ref{prop of VD} proved in \cite{VD} is:

\begin{corollary}\label{cor unitaire et admissible}
Any $\pi\in \Irr(\U)$ is admissible and unitary.
\end{corollary}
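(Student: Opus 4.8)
The plan is to argue by induction on $\dim \U$, the base case $\dim \U = 0$ being trivial. Let $\pi \in \Irr(\U)$. I would first dispose of two reductions that lower the dimension. If the central character $c_\pi$ is trivial, then $\pi$ is inflated from an irreducible representation of the strictly smaller unipotent group $\U/\Ze$, and since inflation along a surjection preserves both admissibility and unitarity, the induction hypothesis gives the claim. If $c_\pi$ is nontrivial but $\dim \lZ > 1$, write $c_\pi\circ\exp|_{\lZ} = \psi\circ\phi_0$ for a nonzero $\phi_0 \in \lZ^*$; then $\Ze_0 := \exp(\ker \phi_0)$ is a central, hence normal, $F$-subgroup on which $c_\pi$ is trivial, so $\pi$ is again inflated from $\U/\Ze_0$, which has strictly smaller dimension. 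Iterating this step (it terminates because the dimension strictly drops while the induced central character stays nontrivial) reduces us to the single essential case $\dim \lZ = 1$ with $c_\pi$ nontrivial.

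In that case Proposition \ref{prop of VD} applies and yields $\pi = \ind_{\U_0}^{\U}(\pi_0)$ with $\pi_0 \in \Irr(\U_0)$, where $\U_0$ has codimension one, together with the explicit model on $\mathcal{C}_c^\infty(F, V_{\pi_0})$ and the action formulas (\ref{equation action of pi0}) and (\ref{equation action de H3}). By the induction hypothesis $\pi_0$ is admissible and unitary, and the remaining task is to transport these two properties across the induction $\ind_{\U_0}^{\U}$ using the model.

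For unitarity, let $h_0$ be a $\U_0$-invariant positive-definite Hermitian form on $V_{\pi_0}$ and set $H(f,g) = \int_F h_0(f(x),g(x))\,dx$ for $dx$ a Haar measure on $F$; this is positive-definite and Hermitian. Invariance under $\H_3$ follows from (\ref{equation action de H3}): translation of the argument preserves $dx$, and the scalar $\chi$ has absolute value one, because $\chi$ is a smooth character of $(F,+)$, which is a union of compact open subgroups and therefore has image in the unit circle. Invariance under $\U_0$ follows from the short computation $(\pi(u_0)f)(x) = \pi_0(\Ad(h(x,0,0))(u_0))f(x)$, so that $\U_0$-invariance of $h_0$ yields invariance of $H$ pointwise in $x$. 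Since $\H_3$ and $\U_0$ generate $\U$, the form $H$ is $\U$-invariant and $\pi$ is unitary.

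For admissibility I expect the main work. Fixing a compact open subgroup $K$, I would show $\dim \pi^K < \infty$ by analyzing the fixed vectors in the model. The action of $h(0,0,z)$ by the scalar $\chi(z)$ forces $K\cap\Ze \subseteq \ker\chi$ (otherwise $\pi^K = 0$); the action of $h(0,y,0)$ by $f(x') \mapsto \chi(x'y)f(x')$ then confines the support of any $f \in \pi^K$ to a fixed compact subset of $F$ determined by the conductor of $\chi$ and by $K$; and invariance under the translations $h(x,0,0) \in K$ forces $f$ to be constant on the cosets of an open subgroup. Together these reduce $f$ to finitely many values $f(x_1),\dots,f(x_r) \in V_{\pi_0}$. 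Finally the identity $(\pi(u_0)f)(x_i) = \pi_0(\Ad(h(x_i,0,0))(u_0))f(x_i)$ shows that each $f(x_i)$ lies in $V_{\pi_0}^{K_i}$ for the compact open subgroup $K_i = \Ad(h(x_i,0,0))(K\cap\U_0)$ of $\U_0$, which is finite-dimensional by the admissibility of $\pi_0$. Hence $\pi^K$ is finite-dimensional, completing the induction. The delicate points will be making the reduction to $\dim \lZ = 1$ clean and controlling both the support and the pointwise stabilizers uniformly in $K$.
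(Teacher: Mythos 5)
Your proof is correct and follows essentially the same route as the paper: induction on $\dim(\U)$, reduction via a central subgroup on which $c_\pi$ is trivial, and in the key case $\dim(\lZ)=1$ with $c_\pi$ nontrivial an appeal to Proposition \ref{prop of VD} together with the explicit model on $\mathcal{C}_c^\infty(F,V_{\pi_0})$ to control the support and pointwise stabilizers of $K$-fixed vectors. You merely make explicit two points the paper leaves implicit (the invariant Hermitian form $\int_F h_0(f(x),g(x))\,dx$ and the treatment of arbitrary compact open subgroups rather than just $\U(O_F)$), which is a welcome level of detail but not a different argument.
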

\begin{proof}
By induction on $\dim(\U)$. If $\dim(\U)=1$ it is clear. If not, if either $\dim(\Ze)\geq 2$ or if $c_\pi$ is trivial, 
then setting $\K=\Ker(c_\pi)$, the group $\overline{\U}=\U/\Ker(c_\pi)$ has dimension smaller than that of $\U$ and we conclude by induction because $\pi$ is a representation of $\overline{\U}$. If $\dim(\Ze)=1$ and $c_\pi$ is nontrivial we can write $\pi=\ind_{\U_0}^{\U}(\pi_0)$ with $\pi_0$ good, thanks to Proposition \ref{prop of VD}. In this case $\pi_0$ must be irreducible so by induction it is unitary and admissible, from which we already conclude that $\pi$ is unitary. Moreover take a function 
$f\in \ind_{\U_0}^{\U}(\pi_0)\simeq \mathcal{C}_c^\infty(F,V_{\pi_0})$ which is fixed by a compact open subgroup $L$ of $\U$. Then by Equation (\ref{equation action de H3}) there is $k\in \Z$ depending on $L$ only such that $f$ is an $\w_F^kO_F$-invariant function on $F$, and by Equation (\ref{equation action de H3}) it must vanish outside the orthogonal of $\w_F^kO_F$ with respect to $\chi$. Hence $f$ is determined by its values on a finite set $A$ depending on $L$ but not on $f\in \pi^L$, and moreover its 
image is a subset of the finite dimensional space 
$V_{\pi_0}^{L'}$ where $L'=\cap_{a\in A} a^{-1}L a$. This means that $\ind_{\U_0}^{\U}(\pi_0)^L$ has finite dimension so that $\pi$ is admissible.
\end{proof}

 Because irreducible representations are unitary the following 
can be proved. 

\begin{corollary}\label{cor induction irreductible}
Suppose that $Z$ has dimension $1$, and let $\pi_0\in \Irr(\U_0)$ be a good representation, then $\pi=\ind_{\U_0}^{\U}(\pi_0)$ is irreducible. Moreover if $\pi_0'\in \Irr(\U_0)$ is another good representation such that $\pi=\ind_{\U_0}^{\U}(\pi_0')$, then $\pi_0'\simeq \pi_0$.
\end{corollary}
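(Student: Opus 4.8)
The plan is to exploit the hypothesis that $\pi_0$ is good in the strong form given by Lemma \ref{lemma complement of VD}: it guarantees that $\pi=\ind_{\U_0}^{\U}(\pi_0)$ is realised on $\mathcal{C}_c^\infty(F,V_{\pi_0})$ by the explicit formulas (\ref{equation action of pi0}) and (\ref{equation action de H3}) of Proposition \ref{prop of VD}. Writing $\U=\U_0\rtimes\exp(FX)$ with $\exp(FX)\cong F$, formula (\ref{equation action de H3}) shows that the Heisenberg subgroup $\H_3$ acts on $\mathcal{C}_c^\infty(F,V_{\pi_0})$ as the Schr\"odinger (Stone--von Neumann) representation of central character $\chi$ on the $F$-variable, tensored with the identity on $V_{\pi_0}$.

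For irreducibility I would compute $\End_{\U}(\pi)$. Any intertwiner $T$ commutes in particular with $\H_3$; since the Stone--von Neumann representation is irreducible with scalar commutant, an operator commuting with all the translations $f(x)\mapsto f(x+a)$ and all the modulations $f(x)\mapsto\chi(xy)f(x)$ coming from (\ref{equation action de H3}) must be of the form $T=\mathbf{1}\otimes A$ with $A$ acting only on $V_{\pi_0}$. Feeding this into (\ref{equation action of pi0}) and using that $T$ commutes with $\U_0$ forces $A\in\End_{\U_0}(\pi_0)$, which equals $\C$ by Schur's lemma because $\pi_0$ is irreducible and admissible (Corollary \ref{cor unitaire et admissible}). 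Hence $\End_{\U}(\pi)=\C$, and as $\pi$ is unitary this yields its irreducibility.

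For the uniqueness statement I would compare central characters of the $\Ad(\exp(xX))$-conjugates $\pi_0^x:=\pi_0\circ\Ad(\exp(xX))$ of $\pi_0$, which are exactly the representations of $\U_0$ appearing in the restriction of $\pi$. Since $\Ad(\exp(xX))Y=Y+xZ$ while $Y,Z$ commute and lie in the centre of $\lN_0$, a short computation gives $\pi_0^x(\exp(yY))=c_{\pi_0}(\exp(yY))\,\chi(\pm xy)$; the goodness of $\pi_0$ makes $c_{\pi_0}$ trivial on $\exp(FY)$, so the central character of $\pi_0^x$ on $\exp(FY)$ is $y\mapsto\chi(\pm xy)$, which is nontrivial precisely when $x\neq 0$. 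Consequently a second good representation $\pi_0'$ with $\ind_{\U_0}^{\U}(\pi_0')\cong\pi$, being isomorphic after restriction to $\U_0$ to some conjugate $\pi_0^{x_0}$, can only be good when $x_0=0$, and therefore $\pi_0'\cong\pi_0$. The same computation shows the stabiliser of the class of $\pi_0$ in $\U/\U_0\cong F$ is trivial, so the irreducibility can alternatively be deduced from Mackey's criterion.

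The main obstacle is to make the two representation-theoretic inputs—the Stone--von Neumann commutant and the Mackey decomposition of the restriction to $\U_0$—rigorous for smooth rather than unitary representations. I would handle this by transporting the argument to the Hilbert-space completion, which is legitimate because $\pi$ is admissible and unitary, so it is the space of smooth vectors of a unitary representation and smooth irreducibility is equivalent to irreducibility of that completion; alternatively one carries out the translation/modulation commutant computation directly on $\mathcal{C}_c^\infty(F,V_{\pi_0})$. The passage between $\ind$ and $\Ind$ needed here is exactly the one recalled in the notations for admissible induced representations, and the only remaining care is bookkeeping of the conjugation conventions defining $\pi_0^x$.
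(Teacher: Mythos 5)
Your argument is correct, and at its core it rests on the same localisation mechanism as the paper's proof: the modulations $h(0,y,0)$ force any intertwiner on $\mathcal{C}_c^\infty(F,V_{\pi_0})$ to act fibrewise, and the translations $h(x,0,0)$ then force it to be constant in the fibre variable. The organisation differs, though. The paper does not compute $\End_{\U}(\pi)$ directly: it uses unitarity to reduce irreducibility to $\dim\End_{\U}(\pi)=1$, uses admissibility to replace $\ind$ by $\Ind$, applies Frobenius reciprocity to get $\End_{\U}(\pi)\simeq\Hom_{\U_0}(\pi,\pi_0')$, and then shows by the modulation computation (using only that $\pi_0'$ is good) that any such map factors through evaluation at $0$. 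This lands in $\Hom_{\U_0}(V_{\pi_0},V_{\pi_0'})$ and so delivers irreducibility and the uniqueness of the good $\pi_0$ in a single stroke. Your route proves irreducibility by a direct commutant computation and then handles uniqueness separately via the central characters of the conjugates $\pi_0^x$ on $\exp(FY)$; that second argument is genuinely different from the paper's and is a clean, self-contained alternative (it only needs the observation that Frobenius reciprocity for compact induction embeds $\pi_0'$ into $\pi|_{\U_0}$, after which evaluation at a point where the image is nonzero identifies $\pi_0'$ with some $\pi_0^{x_0}$). Two cautions on the inputs you invoke as black boxes: the ``scalar commutant'' form of Stone--von Neumann for the smooth Schr\"odinger model is not an off-the-shelf citation in the algebraic category --- it is established by precisely the translation/modulation computation you list as the fallback, so that computation is not really avoidable (passing to the Hilbert completion is legitimate since admissibility of $\pi$ is available from the proof of Corollary \ref{cor unitaire et admissible}, but it is heavier than doing the computation on $\mathcal{C}_c^\infty(F,V_{\pi_0})$); and ``Mackey's criterion'' should stay a heuristic here, since $\U_0$ is closed but not open in $\U$ and the classical smooth Mackey machinery does not apply verbatim to a non-discrete quotient $\U/\U_0\simeq F$.
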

\begin{proof}
Because $\pi_0$ is unitary so is $\pi$, hence $\pi$ is semi-simple, and it is thus sufficient to prove that 
$\Hom_{\U}(\pi,\pi)$ is one dimensional. Now Equation (\ref{equation action de H3}) is satisfied for $\pi_0$ and $\pi$ thanks to our hypothesis, and the proof of Corollary \ref{cor unitaire et admissible} shows that 
$\pi$ is in fact admissible, so 
\[\pi=\Ind_{\U_0}^{\U}(\pi_0)\simeq\Ind_{\U_0}^{\U}(\pi_0').\] Hence one has 
\[\Hom_{\U}(\pi,\pi)\simeq \Hom_{\U_0}(\pi,\pi_0')\] and it remains to show that this latter space is one dimensional when $\pi_0'\simeq \pi_0$ and $\{0\}$ otherwise. Take $L\in\Hom_{\U_0}(\pi,\pi_0')$. We identify $\pi$ with $\mathcal{C}_c^\infty(F,V_{\pi_0}).$ For $\phi \in \mathcal{C}_c(F)$ and 
$f\in \mathcal{C}_c^\infty(F,V_{\pi_0})$ we set 
\[\pi(\phi)f=\int_{F} \phi(y)\pi(0,y,0)f dy.\] Note that \[(\pi(\phi)f)(x)=\widehat{\phi}(x)f(x)\] where the Fourier transform is taken with respect to $\psi$ and the fixed Haar measure on $F$.
On the other hand because $c_{\pi_0}(h(0,F,0)=\{1\}$ there is $c>0$ such that 
\[L(\pi(\phi) f)=\pi_0'(\phi) L(f)=c\widehat{\phi}(0)L(f)\] giving the equality 
\[L(\widehat{\phi}f)=c\widehat{\phi}(0)L(f)\] for all $\phi\in \mathcal{C}_c^\infty(F)$ and 
$f\in \mathcal{C}_c^\infty(F,V_{\pi_0}).$ In particular if $f(0)=0$, taking $\widehat{\phi}$ the characteristic function of a small enough compact open subgroup of $F$, we see that $L(f)=0$. This implies that there exists $L_0\in \Hom_{\U_0}(V_{\pi_0},V_{\pi_0'})$ such that \[L=[\phi\mapsto \phi(0)]\otimes L_0.\] We thus just exhibited a linear injection $L\mapsto L_0$ of 
$\Hom_{\U_0}(\pi,\pi_0)$ into $\Hom_{\U_0}(V_{\pi_0},V_{\pi_0}')$ which is zero if $\pi_0'\not \simeq \pi_0$ and one-dimensional by Schur's lemma otherwise. This concludes the proof.
\end{proof}

Before we state Kirillov's classification let's state another lemma.

\begin{lemma}\label{lemma reducible when non polarized}
Let $(\phi,\lN')$ be a pair where $\phi$ is a linear form on $\lN$ and $\lN'$ is a sub-algebra of $\lN$, such that $B_{\phi}$ is isotropic on $\lN'$, but which is not polarized, then 
$\pi(\U',\U,\psi_\phi)$ is reducible.
\end{lemma}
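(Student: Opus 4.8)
The plan is to deduce reducibility from the existence of a strictly larger totally isotropic subalgebra, and to construct such a subalgebra by induction on $\dim\U$. Suppose we can produce a Lie subalgebra $\lN''$ with $\lN'\subsetneq\lN''$, $\dim\lN''=\dim\lN'+1$ and $\phi([\lN'',\lN''])=0$, so that $\lN''$ is again totally isotropic for $B_\phi$. Being of codimension one, $\lN'$ is then an ideal of $\lN''$, so $\U'=\exp(\lN')$ is normal in $\U''=\exp(\lN'')$ with $\U''/\U'\cong(F,+)$, and the isotropy of $\lN''$ means that $\phi$ restricts to a character $\psi_\phi''$ of $\U''$ extending $\psi_\phi$. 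The projection formula then gives
\[\ind_{\U'}^{\U''}(\psi_\phi)\cong \psi_\phi''\otimes\ind_{\U'}^{\U''}(\1)\cong \psi_\phi''\otimes\mathcal{C}_c^\infty(\U''/\U'),\]
where $\U''/\U'\cong F$ acts on $\mathcal{C}_c^\infty(F)$ by translation. This last representation is reducible, being an infinite dimensional smooth representation of an abelian group (for instance $\{f:\int_F f=0\}$ is a proper nonzero subrepresentation). Writing $\pi(\U',\U,\psi_\phi)=\ind_{\U''}^{\U}(\ind_{\U'}^{\U''}(\psi_\phi))$ by induction in stages, and using that compact induction is exact and sends nonzero representations to nonzero ones, any proper nonzero subrepresentation of $\ind_{\U'}^{\U''}(\psi_\phi)$ induces up to a proper nonzero subrepresentation of $\pi(\U',\U,\psi_\phi)$. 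Thus this part is immediate, and everything reduces to producing $\lN''$.

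The existence of $\lN''$ amounts to saying that a totally isotropic subalgebra which is not of maximal dimension is strictly contained in another totally isotropic subalgebra (equivalently, in a polarization). I would prove this by induction on $\dim\U$, using the center $\lZ$. If $\lZ\not\subseteq\lN'$, pick $Z_0\in\lZ\setminus\lN'$: then $\lN''=\lN'\oplus FZ_0$ is a subalgebra (as $Z_0$ is central) and totally isotropic (as $[\lN'',\lN'']=[\lN',\lN']$), and we are done. So assume $\lZ\subseteq\lN'$. If $\phi|_{\lZ}$ is not injective, choose a central $Z_0\neq 0$ in $\ker\phi\cap\lN'$; then $\psi_\phi$ is trivial on $\exp(FZ_0)$, the problem descends to $\overline{\U}=\U/\exp(FZ_0)=\exp(\lN/FZ_0)$, where the descended pair $(\bar\phi,\lN'/FZ_0)$ is again totally isotropic and non-maximal (all relevant dimensions drop by one, since $Z_0\in\mathfrak{r}_\phi$), and we conclude by induction.

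There remains the core case $\dim\lZ=1$, $\lZ=FZ\subseteq\lN'$, $\phi(Z)\neq 0$. Here I would use the Kirillov decomposition $\lN=FX\oplus FY\oplus FZ\oplus W$ and its codimension one ideal $\lN_0=FY\oplus FZ\oplus W$ from the Preparation subsection. A short computation using $[X,Y]=Z$, $\phi(Z)\neq 0$ and $[\lN_0,Y]=\{0\}$ shows that $\mathfrak{r}_\phi\subseteq\lN_0$, whence the radical of $B_\phi$ restricted to $\lN_0$ has dimension $\dim\mathfrak{r}_\phi+1$ and the polarizations of $\lN_0$ and of $\lN$ have the same dimension. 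Consequently, when $\lN'\subseteq\lN_0$, the pair $(\phi|_{\lN_0},\lN')$ is still non-maximal in $\lN_0$, and the inductive hypothesis applied to $\lN_0$ (of smaller dimension) produces the enlargement inside $\lN_0\subseteq\lN$.

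The main obstacle is the remaining subcase $\lN'\not\subseteq\lN_0$, together with the general difficulty of keeping the enlargement a subalgebra. Here $\lN'+\lN_0=\lN$, and one may adjust the Kirillov decomposition so that $X\in\lN'$; the trouble is that an isotropic enlargement of $\lN'\cap\lN_0$ inside $\lN_0$ need not remain a subalgebra after re-adjoining $X$, and one cannot route the argument through a codimension one ideal either, since no hyperplane ideal contains both $\lN'$ and $\mathfrak{r}_\phi$ in this case (as $\mathfrak{r}_\phi,[\lN,\lN]\subseteq\lN_0$ while $\lN'\not\subseteq\lN_0$, one has $\lN'+\mathfrak{r}_\phi+[\lN,\lN]=\lN$). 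This is precisely the point handled classically by the Vergne polarization construction: choosing a complete flag of ideals $\lN_{(1)}\subset\cdots\subset\lN_{(n)}=\lN$ compatible with $\lN'$ and forming $\mathfrak{p}=\sum_i\mathfrak{r}(\phi|_{\lN_{(i)}})$ yields a polarization containing $\lN'$; then any codimension one subalgebra of $\mathfrak{p}$ through $\lN'$ (which exists because the proper subalgebra $\lN'$ of the nilpotent algebra $\mathfrak{p}$ is strictly contained in its normalizer) furnishes $\lN''$. Verifying that $\mathfrak{p}$ is a totally isotropic subalgebra containing $\lN'$ is where the real work lies, and it is the step I expect to demand the most care.
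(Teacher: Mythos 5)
Your representation-theoretic reduction is sound and runs parallel to the paper's: both arguments hinge on producing a Lie subalgebra $\lN''$ with $\lN'\subsetneq\lN''$ and $\phi([\lN'',\lN''])=0$, after which reducibility of $\ind_{\U'}^{\U''}(\psi_\phi)$ propagates through $\ind_{\U''}^{\U}$ by exactness and faithfulness of compact induction. Where you make the bottom step explicit as $\psi_\phi''\otimes\mathcal{C}_c^\infty(F)$ across a codimension-one enlargement, the paper passes in one stroke to an isotropic subalgebra $\lN''\supsetneq\lN'$ (so that $\psi_\phi$ extends to a character of all of $\U''$) and argues by Frobenius reciprocity: were $\ind_{\U'}^{\U''}(\psi_\phi)$ irreducible it would be admissible, hence equal to $\Ind_{\U'}^{\U''}(\psi_\phi)$, which contains the character $\psi_\phi$ yet is infinite dimensional. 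Both mechanisms are fine.

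The genuine gap is the one you flag yourself: the existence of the enlargement in the case $\dim\lZ=1$, $\lZ\subseteq\lN'$, $\phi|_{\lZ}\neq 0$, $\lN'\not\subseteq\lN_0$ is not established, and your fallback does not close it. The Vergne polarization $\mathfrak{p}=\sum_i\mathfrak{r}(\phi|_{\lN_{(i)}})$ is built from a flag of \emph{ideals}; since $\lN'$ is in general not an ideal it cannot be inserted into such a flag, and there is no reason for the resulting $\mathfrak{p}$ to contain $\lN'$ --- the construction does not produce a polarization through a prescribed subordinate subalgebra. (Your parenthetical claim that no hyperplane ideal contains both $\lN'$ and the radical $\mathfrak{r}_\phi$ of $B_\phi$ is also a non sequitur: from $\lN'\not\subseteq\lN_0$ you may only conclude that $\lN'+\mathfrak{r}_\phi+[\lN,\lN]$ is not contained in $\lN_0$, not that it equals $\lN$.) What is actually needed is the classical fact, going back to Kirillov, that a subalgebra subordinate to $\phi$ and maximal among such subalgebras automatically has the maximal isotropic dimension; its proof is a separate induction and is not a corollary of the Vergne construction. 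To be fair, the paper does not prove this either --- it is absorbed into the phrase ``it is enough to show this when $(\phi,\lN)$ is polarized'' and treated as known --- but as written your derivation of it is incomplete at exactly its hardest point, so the proof does not stand on its own.
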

\begin{proof}
By transitivity of induction and because reducible representations induce to reducible ones, it is enough to show this when $(\phi,\lN)$ is polarized. In this case $\psi_\phi$ defines a character of the whole group $\U$. Suppose that $\ind_{\U'}^{\U}(\psi_\phi)$ was irreducible, in particular we would have 
 $\ind_{\U'}^{\U}(\psi_\phi)=\Ind_{\U'}^{\U}(\psi_\phi)$ by admissibility of irreducible representations. 
 But then \[\Hom_{\U}(\psi_\phi,\Ind_{\U'}^{\U}(\psi_\phi))\simeq \Hom_{\U'}(\psi_\phi,\psi_\phi)\neq 0\] which is absurd as it would imply that $\Ind_{\U'}^{\U}(\psi_\phi)$ is a character, which it is not by assumption.
\end{proof}

We can now obtain Kirillov's classification. 

\begin{thm}\label{theorem K}

\begin{enumerate}[1)]
\item Let $(\lN',\phi)$ be a pair consisting of a subalgebra of $\lN$ and a linear form $\phi$ on $\lN$ such that $\lN'$ is isotropic for $B_{\phi}$. The representation $\pi(\U',\U,\psi_\phi)$ is irreducible if and only if $(\phi,\lN')$ is polarized. 
\item Any irreducible representation of $\U$ is of the form $\pi(\U',\U,\psi_\phi)$ with $(\phi,\lN')$ polarized.
\item Two irreducible representations $\pi(\U',\U,\psi_\phi)$ and $\pi(\U'',\U,\psi_{\phi'})$ are 
isomorphic if and only if $\phi$ and 
$\phi'$ are in the same $\U$-orbit.
\end{enumerate}
\end{thm}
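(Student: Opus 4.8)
The plan is to prove all three assertions simultaneously by induction on $\dim\U$, using the center of $\U$ to cut down the dimension. Throughout I write $r(\phi)$ for the radical of $B_\phi$. Since central elements bracket to zero, $\lZ\subseteq r(\phi)$, and any maximal isotropic subspace must contain $r(\phi)$; hence every polarization $\lN'$ of $(\lN,\phi)$ contains $\lZ$ and has dimension $\tfrac{1}{2}(\dim\lN+\dim r(\phi))$. The base case $\dim\U\leq 1$ is immediate, as then $\U$ is abelian and its irreducible representations are the characters $\psi_\phi$ with $\lN'=\lN$, which is trivially polarized.

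For assertion 2), let $\pi\in\Irr(\U)$. If $c_\pi$ is trivial on some nonzero central ideal $\mathcal{K}\subseteq\lZ$ — in particular whenever $\dim\lZ\geq 2$ or $c_\pi$ is trivial — then $\pi$ factors through $\U/\exp(\mathcal{K})$ and I conclude by induction, pulling back a polarized pair and using $\mathcal{K}\subseteq r(\phi)$ to see the lifted pair is still polarized. Otherwise $\dim\lZ=1$ and $c_\pi$ is nontrivial: Proposition \ref{prop of VD} together with Lemma \ref{lemma complement of VD} writes $\pi=\ind_{\U_0}^{\U}(\pi_0)$ for a good $\pi_0\in\Irr(\U_0)$, and by induction $\pi_0=\pi(\U_0',\U_0,\psi_{\phi_0})$ with $(\phi_0,\lN_0')$ polarized for $\lN_0$. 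Extending $\phi_0$ to a form $\phi$ on $\lN$ and using transitivity of induction gives $\pi=\pi(\U_0',\U,\psi_\phi)$; a short bookkeeping — using $\phi(Z)\neq 0$, that $Y\in r(\phi_0)$ while $\phi([Y,X])=-\phi(Z)\neq 0$, whence $\dim r(\phi)=\dim r(\phi_0)-1$ — shows $\lN_0'$ remains maximal isotropic in $\lN$, so $(\phi,\lN_0')$ is polarized.

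Assertion 1) splits into two implications. The implication irreducible $\Rightarrow$ polarized is exactly the contrapositive of Lemma \ref{lemma reducible when non polarized}. For polarized $\Rightarrow$ irreducible I induct again. When $\phi$ vanishes on a nonzero central ideal $\mathcal{K}$ I quotient by $\exp(\mathcal{K})$ and use that $\psi_\phi$ is inflated, so irreducibility descends from $\U/\exp(\mathcal{K})$. When $\dim\lZ=1$ and $\phi(Z)\neq 0$ I use the canonical decomposition: the same dimension count shows that any polarization $\lN_0'$ of $(\lN_0,\phi_0)$ is simultaneously a polarization of $(\lN,\phi)$, so polarizations contained in $\lN_0$ exist. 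For such a one, transitivity gives $\pi(\U_0',\U,\psi_\phi)=\ind_{\U_0}^{\U}(\pi_0)$ with $\pi_0=\pi(\U_0',\U_0,\psi_{\phi_0})$, and after conjugating $\phi$ by a suitable $\exp(sX)$ — which preserves the ideal $\lN_0$, hence the inclusion $\lN_0'\subseteq\lN_0$ — I may assume $\phi(Y)=0$, so $\pi_0$ is good and Corollary \ref{cor induction irreductible} yields irreducibility.

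The remaining point — needed both to finish polarized $\Rightarrow$ irreducible for an \emph{arbitrary} polarization and to prove assertion 3) — is that $\pi(\U',\U,\psi_\phi)$ depends, up to isomorphism, only on $\phi$ and not on the choice of $\lN'$. I expect this to be the main obstacle, since a polarization need not be conjugate into $\lN_0$ (already for $\H_3$ the subgroups $\exp(FX\oplus FZ)$ and $\exp(\L)$ are non-conjugate yet induce isomorphic representations). I plan to prove it by reducing a general polarization to one inside $\lN_0$: setting $\lN_1'=\lN'\cap\lN_0$ and, when $\lN'\not\subseteq\lN_0$ (so $X'=X+V_0\in\lN'$ for some $V_0\in\lN_0$ and $Y\notin\lN'$), comparing induction from $\lN_1'\oplus FX'$ with induction from the polarization $\lN_1'\oplus FY\subseteq\lN_0$ through the Heisenberg triple $(X',Y,Z)$ with $[X',Y]=Z$ — the smooth $p$-adic form of Stone--von Neumann, which I realize by an explicit intertwining integral, the same Fourier-analytic mechanism appearing in the proof of Corollary \ref{cor induction irreductible}. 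Granting this, assertion 3) follows by induction: equal central characters force $\phi|_{\lZ}=\phi'|_{\lZ}$; in the degenerate-center case one descends modulo $\exp(\mathcal{K})$, and when $\dim\lZ=1$ and $\phi(Z)\neq 0$ one writes both representations as $\ind_{\U_0}^{\U}$ of good $\pi_0,\pi_0'$, uses the uniqueness clause of Corollary \ref{cor induction irreductible} to get $\pi_0\cong\pi_0'$, applies the inductive orbit statement on $\U_0$, and matches the remaining $X$-component by conjugating with a suitable $\exp(tY)$, since $\phi(X)$ may be shifted by any multiple of $\phi(Z)\neq 0$.
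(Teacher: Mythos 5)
Your proposal follows the same route as the paper: the same induction on $\dim\U$, the same reduction modulo a central ideal on which $\phi$ (or $c_\pi$) vanishes, and, in the remaining case $\dim\lZ=1$, $\phi(Z)\neq 0$, the same use of Proposition \ref{prop of VD}, good representations and Corollary \ref{cor induction irreductible}, with Lemma \ref{lemma reducible when non polarized} supplying the reducible direction of 1). Your dimension counts ($r(\phi)\subseteq\lN_0$ and $\dim r(\phi)=\dim r(\phi_{|\lN_0})-1$, so that the polarizations of $(\lN_0,\phi_{|\lN_0})$ are exactly the polarizations of $(\lN,\phi)$ contained in $\lN_0$) are correct and substitute for the paper's slightly different verification in point 2), and your $\exp(sX)$- and $\exp(tY)$-conjugation normalizations are the right ones. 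The only place the two texts genuinely diverge is the step you correctly single out as the main obstacle: replacing an arbitrary polarization by one contained in $\lN_0$ with $\phi(Y)=0$ without changing the induced representation. The paper does not prove this from scratch either; it invokes \cite[Lemma 5.1]{K}, relying on van Dijk's observation that Kirillov's arguments transfer verbatim to smooth representations of unipotent $p$-adic groups. Your proposed proof of that step --- pass to $\lN_1'=\lN'\cap\lN_0$, note $Y\notin\lN'$ when $\lN'\not\subseteq\lN_0$ (by isotropy, since $\phi([X',Y])=\phi(Z)\neq 0$), compare $\lN_1'\oplus FX'$ with $\lN_1'\oplus FY$ inside the subalgebra they jointly span, and intertwine by a partial Fourier transform --- is precisely Kirillov's argument, so the plan is sound; but as written it is announced (``I plan to prove it'', ``Granting this'') rather than carried out, so if you do not want to lean on \cite{K} as the paper does, you still owe the explicit intertwining integral together with the verification that it converges in the smooth compactly-induced model, is $\U$-equivariant, and is bijective.
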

\begin{proof}
Thanks to Lemma \ref{lemma reducible when non polarized}, the first point will be proved if we show that 
$\pi(\U',\U,\psi_\phi)$ is irreducible when $(\phi,\lN')$ is polarized. We do an induction on $\dim(\U)$. If it is $1$ there is nothing to prove. If not we take $\pi=\pi(\U',\U,\psi_\phi)$ with $(\phi,\lN')\in\mathcal{P}(\lN)$. If $\dim(\U)>1$ and $c_\pi$ is trivial or if $\dim(\Ze)>1$, take $H\in \lZ$ such that $\phi(H)=1$, then 
$\pi$ is in fact a representation of $\U/\exp(F.H)$, and $(\lN'/F.H,\overline{\phi})\in \mathcal{P}(
\lN/F.H)$, so we conclude by induction. If $\dim(\Ze)=1$ and $c_\pi$ is non trivial, then according to \cite[Lemma 5.1]{K} we can suppose that $\lN'$ is a subalgebra of $\lN_0$ and that $\phi(Y)=0$. Then the pair $(\phi_{|\lN_0},\lN')$ is polarized and by induction the representation $\pi_0=\pi(\U',\U_0,\psi_{\phi_{|\lN_0}})$ is irreducible; it is moreover good because $\phi(Y)=0$. But then $\pi(\U',\U,\psi_\phi)=\ind_{\U_0}^{\U}(\pi_0)$ is irreducible thanks to Corollary \ref{cor induction irreductible}.

For point 2) we do again an induction on $\dim(\U)$, the one dimension case being obvious. Then if $c_\pi$ is trivial or if $\dim(\Ze)>1$ we 
conclude by induction. If not $\pi=\ind_{\U_0}^{\U}(\pi_0)$ with $\pi_0$ good. By induction 
$\pi_0=\ind_{\U'}^{\U_0}(\psi_{\phi_0})$ for $(\phi_0,\lN')\in \mathcal{P}(\lN_0)$. Then extend $\phi_0$ to a linear 
form $\phi$ on $\lN=F.X\oplus \lN_0$, we claim that the pair $(\phi,\lN')$ remains polarized. Indeed if it was not 
then one would have $\phi([X',\lN'])=0$ for $X'\notin \lN_0$. Writing $X'=aX+N_0$ with $N_0\in \lN_0$, then in particular
one would have $\phi([aX+N_0,Y])=0$, but $[aX+N_0,Y]=aZ+0=aZ$ so this would mean that $\phi(Z)=0$ i.e. that $c_\pi$ is trivial, which it is not.

Point 3) is proved by induction on $\dim(\U)$ as well, and we only focus on the case $\dim(\Ze)=1$ and $c_{\pi}\neq \1$. By \cite[Lemma 5.1]{K} we can suppose that 
both $\lN''$ and $\lN'$ are sub-algebras of $\lN_0$ and that $\phi(Y)=\phi'(Y)=0$. In particular 
$\pi_0=\pi(\U',\U_0,\psi_{\phi_{|\lN_0}})$ and $\pi_0'=\pi(\U'',\U_0,\psi_{\phi'_{|\lN_0}})$are both good, and both induce to $\pi$ so they are isomorphic by Corollary \ref{cor induction irreductible}. By induction this means that 
$\phi_{|\lN_0}$ and $\phi'_{|\lN_0}$ are $\U_0$-conjugate. Then it is explained just before \cite[Lemma 5.2]{K} at the end of the proof of \cite[Theorem 5.2]{K} that this implies that $\phi$ and $\phi'$ are indeed $\U$-conjugate.
\end{proof}

\begin{notation}
By the third point of Theorem \ref{theorem K}, the isomorphism class of the representation $\pi(\U',\U,\phi)$ only depends on 
$\phi$, we set \[\pi(\psi_\phi):=\pi(\U',\U,\psi_\phi).\]
\end{notation}

\section{Unipotent symmetric spaces}

We recall that the map $x\mapsto x^2$ is bijective from $\U$ to itself. We set $\U^{\sigma,-}$ for the closed subset of $\U$ given by the equation $\sigma(u)=u^{-1}$. We have a polar decomposition on $\U$.

\begin{lemma}\label{lemme polar dec}
The multiplication map $m:\U^{\sigma}\times \U^{\sigma,-}\rightarrow \U$ given by $m(u^+,u^-)=u^+u^-$ is a homeomorphism.
\end{lemma}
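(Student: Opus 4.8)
The plan is to give an explicit inverse to $m$ of Cartan type, exploiting the fact recalled just above the statement that the squaring map $s:\U\to\U$, $s(u)=u^2$, is a bijection. First I would upgrade this remark to the assertion that $s$ is in fact a homeomorphism of $\U$: under the homeomorphism $\exp:\lN\to\U$ one has $\exp(X)^2=\exp(2X)$, because every iterated bracket of $X$ with itself vanishes and the Baker--Campbell--Hausdorff series collapses. Thus $s$ is conjugate through $\exp$ to multiplication by $2$ on the $F$-vector space $\lN$, which is a linear homeomorphism since $F$ has characteristic $0$. In particular the square-root map $u\mapsto u^{1/2}:=s^{-1}(u)$ is well defined and continuous.

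Next I would produce the decomposition explicitly. Given $u\in\U$, set $u^-:=(\sigma(u)^{-1}u)^{1/2}$ and $u^+:=u(u^-)^{-1}$. The identities to check are that $u^-\in\U^{\sigma,-}$ and $u^+\in\U^\sigma$. For the first, applying $\sigma$ to $(u^-)^2=\sigma(u)^{-1}u$ gives $\sigma(u^-)^2=u^{-1}\sigma(u)=\big((u^-)^2\big)^{-1}=\big((u^-)^{-1}\big)^2$, whence $\sigma(u^-)=(u^-)^{-1}$ by the injectivity of $s$. For the second, a short computation using $\sigma(u^-)=(u^-)^{-1}$ shows $\sigma(u^+)=\sigma(u)\,u^-=u(u^-)^{-1}=u^+$, where the middle equality is exactly the relation $(u^-)^2=\sigma(u)^{-1}u$. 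Since $u^+u^-=u$ by construction, this proves that $m$ is surjective.

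For injectivity I would observe that the decomposition is forced: if $u=u^+u^-$ with $u^+\in\U^\sigma$ and $u^-\in\U^{\sigma,-}$, then $\sigma(u)=u^+(u^-)^{-1}$, so $\sigma(u)^{-1}u=(u^-)^2$ and hence $u^-=(\sigma(u)^{-1}u)^{1/2}$ and $u^+=u(u^-)^{-1}$ are determined by $u$. Thus $m$ is a bijection whose inverse is the map $u\mapsto\big(u(\sigma(u)^{-1}u)^{-1/2},\,(\sigma(u)^{-1}u)^{1/2}\big)$.

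Finally, for continuity, $m$ is continuous as a restriction of the multiplication map of $\U$. Its inverse is a composition of $\sigma$, inversion, multiplication and the square-root map, all of which are continuous, and its two components take values in the closed subsets $\U^\sigma$ and $\U^{\sigma,-}$ respectively, hence are continuous into those subspaces. Therefore $m$ is a homeomorphism. The only step requiring genuine care -- and the one place where the characteristic-$0$ hypothesis is essential -- is the passage from bijectivity of squaring to the continuity of its inverse; everything else is formal manipulation inside $\U$ together with the injectivity of $s$.
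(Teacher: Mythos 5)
Your proof is correct, and it is essentially the argument the paper relies on: the paper simply cites \cite[Proposition 2.1, 3)]{B1} and notes the proof carries over, while you have written out that standard Cartan-type argument explicitly (bijectivity and bicontinuity of squaring via $\exp(X)^2=\exp(2X)$, then the explicit inverse $u\mapsto\bigl(u(\sigma(u)^{-1}u)^{-1/2},(\sigma(u)^{-1}u)^{1/2}\bigr)$). All the verifications check out, including the key point that the only place the $p$-adic (characteristic $0$) hypothesis enters is the invertibility and continuity of the square-root map.
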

\begin{proof} This is is just \cite[Proposition 2.1, 3)]{B1}, the proof of which is valid in our setting. 
\end{proof}

We will use the following fixed point result in replacement of that used in \cite[Proof of Lemma 4.3.1]{B1}. 
It could be used in ibid. as well.

\begin{lemma}\label{lemma pro p action with involution}
Let $X$ be the $F$-points of an $F$-algebraic variety on which $\U$ acts in an $F$-rational manner, and $\sigma$ be an $F$-rational involution of $X$ 
(i.e. we have two involutions on different sets which we denote by the same letter) such that $\sigma(u.x)=\sigma(u).\sigma(x)$ for all $u\in \U$ and $x\in X$.
Then a $\U$-orbit in $X$ is $\sigma$-stable if and only if it contains a fixed point of $\sigma$. 
\end{lemma}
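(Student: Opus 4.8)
The plan is to treat the two implications very asymmetrically, the converse being the only substantial one. If a $\U$-orbit $\mathcal{O}$ contains a fixed point $x_0$, so $\sigma(x_0)=x_0$, then since $\sigma$ is a bijection of $\U$ one has $\sigma(\mathcal{O})=\{\sigma(u).x_0:u\in\U\}=\{v.x_0:v\in\U\}=\mathcal{O}$, so $\mathcal{O}$ is $\sigma$-stable; this direction requires nothing. For the converse I would argue by induction on $n:=\dim\mathbf{U}$, the case $n=0$ being trivial. So fix a $\sigma$-stable orbit $\mathcal{O}=\U.x$ and write $\sigma(x)=u_0.x$ for some $u_0\in\U$, and set $H:=\operatorname{Stab}_\U(x)$, $\mathbf{H}:=\operatorname{Stab}_{\mathbf{U}}(x)$. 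Since the center $\Ze$ of $\U$ is $\sigma$-stable and $\sigma$ acts on $\mathcal{Z}=\Lie(\Ze)$ as a linear involution, I can pick inside $\Ze$ a one-dimensional central $\sigma$-stable $F$-subgroup $Z=\mathbf{Z}(F)\cong(F,+)$, with $\mathbf{Z}\cong\mathbb{G}_a$ central and $\sigma$-stable, on which $\sigma$ acts as $\pm\mathrm{id}$. The strategy is to peel off $Z$ and invoke the inductive hypothesis for the $(n-1)$-dimensional unipotent group $\U/Z$.

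The key dichotomy is whether $Z\subseteq H$. Because $\mathbf{Z}\cap\mathbf{H}$ is an algebraic subgroup of $\mathbf{Z}\cong\mathbb{G}_a$ and $F$ has characteristic zero, only two cases occur: $Z\subseteq H$, or $\mathbf{Z}\cap\mathbf{H}=\{e\}$. In the first case $Z$, being central, fixes every point of $\mathcal{O}$, so the $\U$-action on the orbit factors through $\U/Z$; as $Z$ is $\sigma$-stable, $\sigma$ descends to a compatible involution of $\U/Z$, and the induction hypothesis, applied to $\U/Z$ acting on the orbit (a closed $F$-subvariety by Kostant--Rosenlicht), produces a fixed point directly. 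In the second case $Z$ acts freely on all of $\mathcal{O}$, so I would form the quotient $\mathcal{O}/Z$, on which $\U/Z$ acts with a descended involution $\overline{\sigma}$; by induction there is $\overline{y}_0\in\mathcal{O}/Z$ with $\overline{\sigma}(\overline{y}_0)=\overline{y}_0$. Lifting to $y_0\in\mathcal{O}$, this says exactly that $\sigma(y_0)=z_0.y_0$ for some $z_0\in Z$, reducing the whole problem to the single free orbit $Z.y_0$.

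It then remains to find a $\sigma$-fixed point inside $Z.y_0\cong(F,+)$, where $Z$ acts simply transitively and $\sigma$ acts by $z\mapsto\sigma(z)z_0$ with $\sigma|_Z=\pm\mathrm{id}$; applying $\sigma$ twice and using freeness gives the compatibility $\sigma(z_0)z_0=e$. If $\sigma|_Z=\mathrm{id}$ this forces $z_0=e$, so $y_0$ is already fixed; if $\sigma|_Z=-\mathrm{id}$ the fixed-point equation becomes, additively, $t_0=2t$, solved by $t=t_0/2$ since $2$ is invertible in $F$. This terminal step is the $p$-adic incarnation of Benoist's ``geodesic midpoint'' and is precisely where the bijectivity of squaring on $\U$ enters. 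I expect the genuine difficulty to lie not in these group-theoretic manipulations but in the geometric bookkeeping needed to run the induction cleanly: one must know that $\mathbf{U}.x$ is a closed $F$-subvariety (Kostant--Rosenlicht), that in characteristic zero the stabilizers are connected unipotent $F$-groups with vanishing Galois cohomology, so that taking $F$-points commutes with forming the orbit and the quotient $\mathcal{O}/Z$, and that $\U/Z$ together with the orbit and its $Z$-quotient genuinely satisfy all the hypotheses of the lemma. Granting this, the induction closes.
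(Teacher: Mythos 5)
Your argument is correct, but it runs the induction along a genuinely different axis than the paper does. The paper's proof also inducts on the dimension of the acting group, but it never forms a quotient: given $\sigma(x)=u.x$, it chooses a \emph{normal codimension-one} subgroup $\V$ of $\U$ containing the stabilizer of $x$, uses the polar decomposition $u=u^+u^-$ of Lemma \ref{lemme polar dec} to show that $u^+\in\V\cap\sigma(\V)$, replaces the base point $x$ by $y=u_1.x$ with $u_1=(u^-)^{1/2}$ so that $\sigma(y)=v.y$ with $v=u_1^{-1}u^+u_1\in\V\cap\sigma(\V)$, and then applies the inductive hypothesis to the smaller $\sigma$-stable subgroup $\V\cap\sigma(\V)$ acting on the \emph{same} variety $X$. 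You instead peel off a one-dimensional central $\sigma$-eigenline $Z$, pass to $\U/Z$ acting on $\mathcal{O}$ or on $\mathcal{O}/Z$, and finish with the rank-one computation $t=t_0/2$; your use of $\sigma|_Z=\pm\mathrm{id}$ together with the compatibility $\sigma(z_0)z_0=e$ is sound, and your midpoint $t_0/2$ plays exactly the role of the paper's square root $u_1=(u^-)^{1/2}$ (both are where bijectivity of squaring enters). The trade-off is real: the paper's route needs the polar decomposition as an external input but stays entirely inside subgroups of $\U$, so no quotient varieties, no Kostant--Rosenlicht, and no Galois cohomology are required; your route dispenses with the polar decomposition but must equip $\mathcal{O}/Z$ with the structure of the $F$-points of an $F$-variety carrying a rational $\U/Z$-action, which rests on the identification $\mathbf{U}.x\simeq\mathbf{U}/\mathbf{H}$ and the vanishing of $H^1(F,\cdot)$ for unipotent groups in characteristic zero. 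You correctly identify these prerequisites, and they are standard here, so I consider your proof complete; it is, if anything, closer in spirit to a Kirillov-style ``reduce by the center'' induction, whereas the paper reduces by a corank-one normal subgroup above the stabilizer.
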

\begin{proof}
Take $O=\U.x$ a $\U$-orbit in $X$. If it contains a $\sigma$-fixed point $y$, then $y=u.x$ and 
$\sigma(x)=\sigma(u).y=\sigma(u)u^{-1}.x$ so $O$ is $\sigma$-stable. Conversely suppose that $\sigma(O)=O$. We denote by $\K$ the stabilizer of $x$ (it is an $F$-subgroup of $\U$). If 
$\K=\U$ there is nothing to prove. If not because $\mathbf{U}$ is unipotent there is a sequence $\K<\V\triangleleft\U$ with $\V$ a normal $F$-subgroup of $\U$ such that $\U/\V$ is commutative of 
dimension $1$ (this property can be proved by induction on $(\dim(\U),\dim(\U)-\dim(\K))$ with lexicographic ordering). Now $\sigma(x)=u.x$ for $u\in \U$ by assumption. This implies that $\sigma(u)u$ belongs to 
$\K$ hence to $\V$, so $\overline{\sigma(u)}=\overline{u}^{-1}\in \U/\V$, i.e. 
\[\overline{u^+}\ {\overline{u^-}}^{-1}={\overline{u^-}}^{-1}{\overline{u^+}}^{-1}={\overline{u^+}}^{-1}{\overline{u^-}}^{-1}\Leftrightarrow \overline{u^+}^2=\overline{1}\] in $\U/\V$, which implies that $\overline{u^+}=\overline{1}\in \U/\V$, so that $u^+\in \V$. However because 
$u^+$ is fixed by $\sigma$, it implies that \[u^+\in \V\cap \sigma(\V).\] Note that because $\V$ is normal in $\U$ so is 
$\sigma(\V)$ hence \[\V\cap \sigma(\V)\triangleleft\U.\] We set \[u_1=(u^-)^{1/2}\] so that 
\[\sigma(u_1)=u_1^{-1}\] (because this relation is true when squared) and 
\[v=u_1^{-1}u u_1^{-1}=u_1^{-1} u^+ u_1,\] hence \[v\in \V\cap \sigma(\V).\]
So setting \[y=u_1.x\] this implies that 
\[\sigma(y)=\sigma(u_1).\sigma(x)=u_1^{-1}u.x=v.y\] Hence 
$\sigma(y)$ and $y$ are in the same $\V\cap \sigma(\V)$-orbit $O'$ inside $O$. Now because $\V\cap \sigma(\V)$ is $\sigma$-stable and has dimension smaller than that of $\U$, we conclude by induction that $\sigma$ fixes a point of $O'$, hence a point of $O$.
\end{proof}

 We make $\sigma$ act on $\lN^*$ by the formula 
 \[\sigma(\phi)=-\phi^\sigma.\] Then a very special case of Lemma \ref{lemma pro p action with involution} is:

\begin{lemma}\label{lemme existence de polarisations stables}
Take $\phi\in \lN^*$, then $\sigma(\phi)$ and $\phi$ are in the same $\U$-orbit if and only if there is a $\sigma$-fixed linear form in the $\U$-orbit of $\phi$, i.e. a linear form which vanishes on $\lN^\sigma$.
\end{lemma}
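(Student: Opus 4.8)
The plan is to deduce the statement directly from Lemma \ref{lemma pro p action with involution}, applied to the coadjoint action. First I would take $X=\lN^*$, which is the set of $F$-points of the affine $F$-variety dual to $\boldsymbol{\mathcal{N}}$, and equip it with the coadjoint action $u.\phi=\phi\circ \Ad(u)^{-1}$; this action is $F$-rational because $\Ad$ is. The involution on $X$ is the one already introduced, $\sigma(\phi)=-\phi^\sigma$, which is plainly $F$-rational and squares to the identity since $(\phi^\sigma)^\sigma=\phi$.

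The essential verification is the compatibility hypothesis $\sigma(u.\phi)=\sigma(u).\sigma(\phi)$ required by Lemma \ref{lemma pro p action with involution}. I would check it by unwinding both sides: the left-hand side equals $-\phi\circ \Ad(u)^{-1}\circ \sigma$, while the right-hand side, after using that $\sigma$ is a Lie algebra automorphism so that $\Ad(\sigma(u))=\sigma\circ \Ad(u)\circ \sigma$, also reduces to $-\phi\circ \Ad(u)^{-1}\circ \sigma$ (here $\sigma$ denotes both the involution of $\U$ and its differential on $\lN$). This is the only genuine computation in the proof, and I expect it to be the main, though modest, obstacle.

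It then remains to translate the conclusion of Lemma \ref{lemma pro p action with involution} into the two conditions of the statement. On one hand, the compatibility just checked shows that $\sigma$ carries the $\U$-orbit of $\phi$ onto the $\U$-orbit of $\sigma(\phi)$; hence the orbit of $\phi$ is $\sigma$-stable exactly when $\sigma(\phi)$ and $\phi$ lie in the same $\U$-orbit, which is the left-hand condition. On the other hand, a $\sigma$-fixed point $\phi'$ of $X$ satisfies $-\phi'^\sigma=\phi'$, i.e. $\phi'^\sigma=-\phi'$. Using the eigenspace decomposition $\lN=\lN^\sigma\oplus \lN^{-\sigma}$ into the $\pm 1$ eigenspaces of $\sigma$ (available since $F$ has characteristic zero), one checks that $\phi'^\sigma=-\phi'$ holds if and only if $\phi'$ vanishes on $\lN^\sigma$. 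Thus the clause ``the orbit contains a $\sigma$-fixed point'' becomes ``the orbit contains a linear form vanishing on $\lN^\sigma$'', which is the right-hand condition. Feeding these two identifications into Lemma \ref{lemma pro p action with involution} yields the desired equivalence.
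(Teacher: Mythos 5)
Your proposal is correct and follows exactly the route the paper intends: the paper introduces the action $\sigma(\phi)=-\phi^\sigma$ on $\lN^*$ and simply declares the lemma to be ``a very special case'' of Lemma \ref{lemma pro p action with involution}, leaving all details implicit. You have merely supplied the verifications the paper omits (the equivariance $\sigma(u.\phi)=\sigma(u).\sigma(\phi)$ via $\Ad(\sigma(u))=\sigma\circ\Ad(u)\circ\sigma$, and the identification of $\sigma$-fixed points with forms vanishing on $\lN^\sigma$ using the eigenspace decomposition), and these are all accurate.
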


Finally we have:

\begin{lemma}\label{lemma orbits of distinguished polarizations}
Let $\phi$  and $\phi'$ be two $\sigma$-fixed $\U$-conjugate linear forms on $\lN$, then they are $\U^\sigma$-conjugate.  
\end{lemma}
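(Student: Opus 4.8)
The plan is to reduce the statement to solving a single equation inside the stabilizer of $\phi$, and to solve that equation by extracting a square root. First I would fix some $u\in\U$ with $\phi'=u.\phi$, which exists by the $\U$-conjugacy hypothesis, and introduce the stabilizer $\K=\operatorname{Stab}_\U(\phi)$, which is a unipotent $F$-subgroup of $\U$. Because $\phi$ is $\sigma$-fixed and the action of $\U$ on $\lN^*$ is compatible with $\sigma$ in the sense of Lemma \ref{lemma pro p action with involution} (this compatibility is precisely what exhibits Lemma \ref{lemme existence de polarisations stables} as a special case of that lemma), I would first record that $\K$ is $\sigma$-stable: if $u.\phi=\phi$ then $\sigma(u).\phi=\sigma(u).\sigma(\phi)=\sigma(u.\phi)=\phi$, so $\sigma(\K)\subseteq\K$ and hence $\sigma(\K)=\K$.

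The heart of the argument is the analysis of the element $k:=u^{-1}\sigma(u)$. Using that both $\phi$ and $\phi'$ are $\sigma$-fixed I would compute $\sigma(u).\phi=\sigma(u).\sigma(\phi)=\sigma(u.\phi)=\sigma(\phi')=\phi'=u.\phi$, which gives $k.\phi=\phi$, i.e. $k\in\K$; and a direct manipulation yields $\sigma(k)=\sigma(u)^{-1}u=k^{-1}$, so that $k$ lies in $\K^{\sigma,-}:=\{v\in\K:\sigma(v)=v^{-1}\}$. The key observation is that replacing $u$ by $uc$ with $c\in\K$ does not change the image, since $uc.\phi=u.\phi=\phi'$; hence it suffices to find $c\in\K$ with $uc\in\U^\sigma$, and unwinding $\sigma(uc)=uc$ this is exactly the equation $k=c\sigma(c)^{-1}$.

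Finally I would solve this equation by a square root. Since $x\mapsto x^2$ is a bijection of $\U$ and $\K=\exp(\lN_\K)$ is a unipotent $F$-subgroup (with $\lN_\K$ its Lie algebra), the unique square root $c:=k^{1/2}=\exp(\tfrac12\ln k)$ again lies in $\K$. From $\sigma(c)^2=\sigma(k)=k^{-1}=(c^{-1})^2$ together with uniqueness of square roots in $\U$ one deduces $\sigma(c)=c^{-1}$, whence $c\sigma(c)^{-1}=c\cdot c=c^2=k$ as required. Setting $u^+:=uc$ then gives $\sigma(u^+)=\sigma(u)c^{-1}=uk\cdot k^{-1/2}=uc=u^+$, so $u^+\in\U^\sigma$, while $u^+.\phi=u.\phi=\phi'$; this is precisely the asserted $\U^\sigma$-conjugacy.

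The only genuinely nontrivial point, and hence the main obstacle, is verifying that the square root $c=k^{1/2}$ stays inside the stabilizer $\K$ and intertwines correctly with the involution, namely $\sigma(c)=c^{-1}$; once this is in place the remaining verifications are formal, built from the two identities $\sigma(u)=uk$ and $k=c^2$. Everything rests on the bijectivity of squaring on unipotent $p$-adic groups and on the $\sigma$-stability of $\K$, so in contrast with the proof of Lemma \ref{lemma pro p action with involution} no induction on $\dim\U$ is needed here.
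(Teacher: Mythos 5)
Your proof is correct and is essentially the paper's argument: the paper invokes the polar decomposition of Lemma \ref{lemme polar dec} to write $u=u^+u^-$ and then observes, exactly as you do, that squaring is a bijection of the unipotent stabilizer of $\phi$, which forces $u^-$ (your $c^{-1}$) into that stabilizer so that $u^+=uc$ works. Your version merely rederives the relevant instance of the polar decomposition by hand, via the square root of $k=u^{-1}\sigma(u)$, instead of citing the lemma.
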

\begin{proof}
It follows from the polar decomposition as in \cite[Lemma 4.3.1, b)]{B1}. Note that in the proof of ibid. it is enough to argue that if $u^2$ is in the stabilizer of $\phi$, then clearly $u$ is because the stabilizer in question is unipotent as well (so that $u \mapsto u^2$ is a bijection of it).
\end{proof}

\section{Distinction, conjugate self-duality and multiplicity one}\label{section distinction}

We now recover the results we are interested in from \cite{B1} and \cite{B2}, with the same proofs. Multiplicity one and conjugate self-duality for distinguished representations of $\U$ follow from the Gelfand-Kazhdan argument, or more precisely its simplification by Bernstein-Zelevinsky (\cite{BZ76}). We indeed notice that 
the space of double cosets \[\U^{\sigma}\backslash \U /\U^{\sigma}\] 
is fixed by the anti-involution 
\[\theta(g)\rightarrow \sigma(g)^{-1}\] thanks to Lemma \ref{lemme polar dec}. In particular any 
bi-$\U^\sigma$-invariant distribution on $\U$ is fixed by $\theta$ thanks to \cite[Theorems 6.13 and 6.15]{BZ76}. This implies 
as in \cite{GK}, or more precisely as in \cite[Lemma 4.2]{Ptri}, that for any irreducible representation $\pi\in \Irr(\U)$ one has 
\[\dim(\Hom_{\U^\sigma}(\pi,\C))\dim(\Hom_{\U^\sigma}(\pi^\vee,\C))\leq 1.\]

\begin{prop}\label{prop mult 1}
For $\pi\in \Irr_{\U^\sigma}(\U)$ one has $\dim(\Hom_{\U^\sigma}(\pi,\C))\leq 1$ and 
$\pi^\vee\simeq \pi^\sigma$. 
\end{prop}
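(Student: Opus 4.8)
The plan is to obtain both assertions from the product inequality
\[\dim(\Hom_{\U^\sigma}(\pi,\C))\,\dim(\Hom_{\U^\sigma}(\pi^\vee,\C))\leq 1\]
recorded just above, together with the unitarity of $\pi$. For the multiplicity bound I would first note that $\pi$ being distinguished forces the left factor to be $\geq 1$. Next I would check that $\pi^\vee$ is distinguished as well: by Corollary \ref{cor unitaire et admissible} the representation $\pi$ is unitary, so $\pi^\vee\simeq\overline{\pi}$, and composing a nonzero $\ell\in\Hom_{\U^\sigma}(\pi,\C)$ with complex conjugation produces a nonzero element of $\Hom_{\U^\sigma}(\overline{\pi},\C)\simeq\Hom_{\U^\sigma}(\pi^\vee,\C)$ (here one uses that $\U^\sigma$ is pointwise fixed by $\sigma$). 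Thus the right factor is also $\geq 1$; since the product is $\leq 1$ and both factors are nonnegative integers, each equals $1$, which gives $\dim(\Hom_{\U^\sigma}(\pi,\C))=1$.

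For the conjugate self-duality I would reformulate the goal. Writing $\theta(g)=\sigma(g)^{-1}$ for the anti-involution of Lemma \ref{lemme polar dec} and $\pi^\theta(g)={}^t\pi(\theta(g))=(\pi^\vee)^\sigma(g)$ for the associated $\theta$-contragredient, the relation $\pi^\vee\simeq\pi^\sigma$ is equivalent to $\pi^\theta\simeq\pi$ (apply the $\sigma$-twist to both sides). To establish $\pi^\theta\simeq\pi$ I would run the relative form of the Gelfand--Kazhdan argument: using the distinguishing functionals $\ell\in\Hom_{\U^\sigma}(\pi,\C)$ and $\ell^\vee\in\Hom_{\U^\sigma}(\pi^\vee,\C)$ (both now one-dimensional), I would attach to $\pi$ its relative character, a nonzero bi-$\U^\sigma$-invariant distribution on $\U$. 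By the discussion preceding the statement this distribution is $\theta$-invariant, and unwinding $\theta$-invariance identifies the relative character of $\pi$ with that of $\pi^\theta$; since a distinguished irreducible representation should be determined by its relative character, this yields $\pi^\theta\simeq\pi$, hence $\pi^\vee\simeq\pi^\sigma$.

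The step I expect to be the main obstacle is precisely the last one: upgrading the \emph{equality of relative characters} (equivalently, the fact that $\pi^\theta$ is again distinguished) to the \emph{isomorphism} $\pi^\theta\simeq\pi$, since merely knowing that $\pi^\vee$ and $\pi^\sigma$ are both distinguished does not identify them. As a cross-check, and as an alternative to the abstract injectivity of the relative-character map, I would verify the isomorphism through the Kirillov parametrization of Theorem \ref{theorem K}: writing $\pi=\pi(\psi_\phi)$ with $(\phi,\lN')$ polarized, one has $\pi^\vee\simeq\pi(\psi_{-\phi})$ and $\pi^\sigma\simeq\pi(\psi_{\phi^\sigma})$, so by the third point of Theorem \ref{theorem K} the desired isomorphism amounts to $\phi$ and $\sigma(\phi)=-\phi^\sigma$ lying in a common $\U$-orbit, which by Lemma \ref{lemme existence de polarisations stables} means that the $\U$-orbit of $\phi$ contains a $\sigma$-fixed linear form. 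On this route the proposition reduces to extracting that orbit condition from the non-vanishing of $\Hom_{\U^\sigma}(\pi,\C)$, which is exactly where the analytic input of the Gelfand--Kazhdan/Bernstein--Zelevinsky argument \cite{BZ76} is genuinely needed.
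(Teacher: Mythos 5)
Your proposal is correct and follows essentially the same route as the paper: unitarity gives $\pi^\vee\simeq\overline{\pi}$, hence $\pi^\vee$ is distinguished and the product inequality forces multiplicity one, while the $\theta$-fixedness of the bi-$\U^\sigma$-invariant distribution built from the two functionals gives the conjugate self-duality. The one step you flag as the main obstacle---upgrading equality of relative characters to the isomorphism $\pi^\theta\simeq\pi$---is precisely what the paper outsources to \cite[Lemma 3]{H} (Hakim's form of the Gelfand--Kazhdan criterion, applied with $\H_1=\H_2=\U^\sigma$ and the characters extended by $c_\pi$ on the centre), so your sketch needs no new idea beyond that standard separation lemma.
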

\begin{proof}
Suppose that $\pi$ is distinguished and take $L\in \Hom_{\U^\sigma}(\pi,\C)-\{0\}$. Because $\pi$ is unitary its contragredient $\pi^\vee$ it is isomorphic to $\overline{\pi}$ where $\overline{\pi}=c\circ \pi\circ c^{-1}$ with $c$ the complex conjugation on the space of $\pi$ obtained by the choice of a basis of this space. In particular $\overline{L}=L\circ c^{-1}\in \Hom_{\U^\sigma}(\overline{\pi},\C)$.
 Then the map \[T_{L,\overline{L}}:f\in \mathcal{C}_c^\infty(\U)\mapsto \overline{L}((\pi(f) L)\] is a bi-$\U^\sigma$-invariant hence fixed by $\theta$. We conclude by applying \cite[Lemma 3]{H} (where we take $\H_1=\H_2=\U^{\sigma}$ and 
 $\chi_2(zu^+)=\chi_1(zu^+)=c_\pi(z)$ for $u^+\in \U^{\sigma}$ and $z\in \Ze$, remembering that $c_\pi$ is necessarily trivial on $\Ze^\sigma$).
\end{proof}

Note that $(\lN^*)^\sigma$ and $(\frac{\lN}{\lN^\sigma})^*$ are canonically isomorphic, and we identify them. It is a space acted upon by $\U^\sigma$. Before stating the main theorem, we recall \cite[Lemma 2.2.1]{B2}, the proof of which is valid over $F$ (as it relies on \cite[Proposition 1.1.2]{V} which has no assumption on the field).

\begin{lemma}\label{polarisation stable}
Take $\phi\in(\frac{\lN}{\lN^\sigma})^*$, then there is a $\sigma$-stable Lie sub-algebra $\lN'$ of $\lN$ such that $(\phi,\lN')$ is polarized.
\end{lemma}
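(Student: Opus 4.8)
The plan is to prove the existence of a $\sigma$-stable polarization for a $\sigma$-fixed linear form $\phi \in (\lN/\lN^\sigma)^*$, i.e.\ a form with $\phi^\sigma = -\phi$, so that $\sigma(\phi) = -\phi^\sigma = \phi$. The key structural observation is that $\sigma$ acts on $\lN$ as an involutive Lie algebra automorphism, giving an eigenspace decomposition $\lN = \lN^\sigma \oplus \lN^{-}$ where $\lN^{-}$ is the $(-1)$-eigenspace (this decomposition exists since we work in characteristic zero). Since $\phi$ vanishes on $\lN^\sigma$ and $\phi^\sigma = -\phi$, the bilinear form $B_\phi(X,Y) = \phi([X,Y])$ is $\sigma$-invariant in the sense that $B_\phi(\sigma X, \sigma Y) = \sigma(\phi)([\sigma X,\sigma Y]) = \phi([X,Y]) = B_\phi(X,Y)$, using $\sigma(\phi)=\phi$ and that $\sigma$ preserves the bracket.

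The approach I would take is to invoke \cite[Proposition 1.1.2]{V}, as the statement itself indicates, which is a general result producing $\sigma$-stable polarizations for $\sigma$-invariant alternating forms on a Lie algebra in a coordinate-free manner valid over any field. The point of citing it is precisely that Vergne's inductive construction of a polarizing subalgebra (via a flag of $\sigma$-stable subspaces) can be carried out equivariantly with respect to the involution $\sigma$, because the flag it builds from the $\sigma$-invariant form $B_\phi$ can be chosen $\sigma$-stable at each stage. Concretely, Vergne's construction chooses a basis-adapted flag $0 = \lN_0 \subset \lN_1 \subset \cdots \subset \lN_n = \lN$ of ideals and sets $\lN' = \sum_i \lN_i^{B_\phi} \cap \lN_i$ (the radicals of $B_\phi$ restricted to each $\lN_i$); since the whole flag can be taken $\sigma$-stable and $B_\phi$ is $\sigma$-invariant, each radical is $\sigma$-stable, hence so is the resulting subalgebra $\lN'$.

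First I would record the eigenspace decomposition and verify the $\sigma$-invariance of $B_\phi$ explicitly from $\sigma(\phi) = \phi$, which is exactly the hypothesis $\phi \in (\lN/\lN^\sigma)^* = (\lN^*)^\sigma$. Then I would appeal to \cite[Proposition 1.1.2]{V} to produce a polarization $(\phi, \lN')$ with $\lN'$ a $\sigma$-stable subalgebra; the totally isotropic and maximal-dimension conditions are precisely the content of $(\phi,\lN')$ being polarized in the sense defined in the excerpt. The only point requiring care is that Vergne's descending/ascending flag construction respects $\sigma$, which follows because one may choose the central ideals used at each step to be $\sigma$-stable (the center and its preimages are canonically $\sigma$-stable).

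The main obstacle I expect is ensuring the flag used in Vergne's construction can genuinely be chosen $\sigma$-stable throughout the induction rather than merely at the top. Since \cite[Proposition 1.1.2]{V} is cited as being valid over any field with no extra hypotheses, and the paper explicitly states the proof of \cite[Lemma 2.2.1]{B2} carries over, I would treat the equivariant version of Vergne's argument as the black box it is intended to be: the inductive step quotients by a $\sigma$-stable one-dimensional central ideal contained in $\lN^\sigma$ or in $\lN^-$ (the center $\lZ$ is $\sigma$-stable and decomposes into eigenspaces, so such a one-dimensional $\sigma$-stable subideal always exists), reducing to a lower-dimensional $\sigma$-equivariant problem, and reassembling the polarization from the preimage of the polarization produced downstairs. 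This keeps $\sigma$-stability intact at each stage, which is the crux of the whole argument.
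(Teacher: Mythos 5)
Your approach is essentially the paper's, which handles this lemma purely by citation: it recalls \cite[Lemma 2.2.1]{B2} and notes that its proof, resting on \cite[Proposition 1.1.2]{V} (the Vergne polarization $\lN'=\sum_i \lN_i\cap\lN_i^{\perp_{B_\phi}}$ built from a $\sigma$-stable flag of ideals), goes through over any field. The one slip is a sign: since $\sigma(\phi)=-\phi^{\sigma}=\phi$ one gets $B_\phi(\sigma X,\sigma Y)=-B_\phi(X,Y)$ rather than $+B_\phi(X,Y)$, but this anti-invariance still forces the radicals $\lN_i\cap\lN_i^{\perp_{B_\phi}}$ of a $\sigma$-stable flag to be $\sigma$-stable, so the argument is unaffected.
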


We can now prove the following result.

\begin{thm}\label{thm distinction and sel-duality}
A representation $\pi\in \Irr_{\U^\sigma}(\U)$ is distinguished if and only $\pi^\vee=\pi^\sigma$. Moreover the map $\U^{\sigma}.\phi\mapsto \pi(\psi_\phi)$ is a bijection from 
$\U^\sigma\backslash (\frac{\lN}{\lN^\sigma})^*$ to $\Irr_{\U^\sigma \dist}(\U)$.
\end{thm}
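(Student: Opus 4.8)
The plan is to prove the theorem in two halves. First I would establish the equivalence ``$\pi$ distinguished $\iff \pi^\vee \simeq \pi^\sigma$''. The forward direction is already Proposition~\ref{prop mult 1}, so the real content is the converse: assuming $\pi^\vee \simeq \pi^\sigma$, produce a nonzero $\U^\sigma$-invariant functional. I would pass through the Kirillov parametrization: write $\pi = \pi(\psi_\phi)$ for some $\phi \in \lN^*$ with $(\phi,\lN')$ polarized (Theorem~\ref{theorem K}). The hypothesis $\pi^\vee \simeq \pi^\sigma$ should translate, via point 3) of Theorem~\ref{theorem K} and the fact that taking contragredient corresponds to negating the linear form, into the statement that $\sigma(\phi) = -\phi^\sigma$ and $\phi$ lie in the same $\U$-orbit. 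By Lemma~\ref{lemme existence de polarisations stables} this is equivalent to $\phi$ being $\U$-conjugate to a $\sigma$-fixed linear form, i.e. one vanishing on $\lN^\sigma$, so we may as well assume $\phi \in (\lN/\lN^\sigma)^*$ from the start.

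Once $\phi \in (\lN/\lN^\sigma)^*$, I would invoke Lemma~\ref{polarisation stable} to choose the polarizing subalgebra $\lN'$ to be \emph{$\sigma$-stable}. This is the geometric heart of the argument, because a $\sigma$-stable polarization lets one build the invariant functional by integration over $\U^\sigma$. Concretely, realize $\pi = \ind_{\U'}^{\U}(\psi_\phi)$ and define a candidate functional by integrating a function $f$ in the model against the trivial character over the closed orbit $\U^\sigma$, or equivalently over $\U'^\sigma \backslash \U^\sigma$; because $\phi$ vanishes on $\lN^\sigma$ the character $\psi_\phi$ is trivial on $\U'^\sigma = \exp(\lN'^\sigma)$, so the integral is well defined. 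The polar decomposition (Lemma~\ref{lemme polar dec}) together with the $\sigma$-stability of $\lN'$ should guarantee that the relevant integral converges (the support of $f$ meets the orbit in a compact set) and is nonzero, exhibiting distinction. I expect this convergence/nonvanishing verification to be the main obstacle, and it is exactly the point where the $p$-adic argument may have to deviate from Benoist's real-case reasoning.

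For the second half, the bijectivity of $\U^\sigma.\phi \mapsto \pi(\psi_\phi)$, I would argue that this map is well defined, surjective, and injective. Well-definedness: if $\phi,\phi'$ are $\U^\sigma$-conjugate then a fortiori $\U$-conjugate, so $\pi(\psi_\phi) \simeq \pi(\psi_{\phi'})$ by Theorem~\ref{theorem K}~3). Surjectivity onto $\Irr_{\U^\sigma\text{-}\dist}(\U)$: given a distinguished $\pi$, the first half shows $\pi^\vee \simeq \pi^\sigma$, hence $\pi \simeq \pi(\psi_\phi)$ for some $\phi$ that is $\U$-conjugate to a $\sigma$-fixed form, so after replacing $\phi$ within its orbit we may take $\phi \in (\lN/\lN^\sigma)^*$; this representative is the preimage. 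Injectivity is precisely Lemma~\ref{lemma orbits of distinguished polarizations}: if $\phi,\phi' \in (\lN/\lN^\sigma)^*$ are $\sigma$-fixed and give isomorphic representations, then they are $\U$-conjugate by Theorem~\ref{theorem K}~3), and the lemma upgrades this to $\U^\sigma$-conjugacy. I would close by noting that the domain $\U^\sigma \backslash (\lN/\lN^\sigma)^*$ is exactly the set of $\U^\sigma$-orbits of $\sigma$-fixed linear forms, matching the three verifications.

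The one step that genuinely requires care, and which I would flag as the crux, is producing the nonzero invariant functional from the $\sigma$-stable polarization in the converse direction of the first half; everything else reduces to bookkeeping with the Kirillov classification and the two conjugacy lemmas already in hand.
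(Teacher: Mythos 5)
Your proposal follows essentially the same route as the paper's own proof: reduce via Lemma \ref{lemme existence de polarisations stables} to $\phi\in(\lN/\lN^\sigma)^*$, take a $\sigma$-stable polarization from Lemma \ref{polarisation stable}, exhibit the invariant functional as $f\mapsto\int_{\U'^\sigma\backslash\U^\sigma}f(u)\,du$ (the paper justifies convergence by noting $\U'^\sigma\backslash\U^\sigma$ is closed in $\U'\backslash\U$), and get injectivity from Lemma \ref{lemma orbits of distinguished polarizations}. The argument is correct and matches the paper.
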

\begin{proof}
Suppose that $\pi=\pi(\psi_{\phi})\in \Irr(\U)$ is conjugate self-dual, then $\sigma (\phi)$ and 
$\phi'$ are in the same $\U$-orbit, which must contain a $\sigma$-fixed linear form thanks to Lemma \ref{lemme existence de polarisations stables}. So we can in fact suppose that $\phi\in (\frac{\lN}{\lN^\sigma})^*$. In particular by Lemma \ref{polarisation stable} we can write $\pi(\psi_\phi)=\pi(\U',\U,\psi_\phi)$ for $\U'=\exp(\lN')$ which is $\sigma$-stable. The quotient $\U'^\sigma\backslash \U^\sigma$ identifies with a closed subset of $\U'\backslash \U$ and the condition 
$\phi\in(\frac{\lN}{\lN^\sigma})^*$ implies that $\psi_\phi$ is trivial on $\U'^\sigma$. 
 Then $\pi$ is distinguished, with explicit linear nonzero $\U^\sigma$-invariant linear form given on $\pi$ by 
\[\lambda:f\mapsto \int_{\U'^\sigma\backslash \U^\sigma} f(u)du.\] To finish the proof it remains to prove the injectivity 
of the map $\U^{\sigma}.\phi\mapsto \pi(\psi_{\phi})$, which is Lemma \ref{lemma orbits of distinguished polarizations}.
\end{proof}

In particular in the case of the Galois involution one gets a bijective correspondence between 
$\Irr(\U^\sigma)$ and $\Irr_{\U^\sigma \dist}(\U)$. Indeed $\mathbf{U}=\Res_{E/F}(\mathbf{U}^\sigma)$ for $E$ a quadratic extension of $F$. Writing $\d$ for an element of $E-F$ with square in $F$. One can identify the space $(\lN^\sigma)^*$ to the space $(\lN^*)^\sigma$ by the map 
\[\mathrm{C}: \phi_\sigma\rightarrow \phi\] where 
\[\phi(N+\d N')=\phi_\sigma(N').\] This yields:

\begin{corollary}\label{cor correspondence}
When $E/F$ is a Galois involution, the map $\pi(\psi_{\phi_\sigma})\rightarrow \pi(\psi_{\phi})$ is 
a bijective correspondence from $\Irr(\U^\sigma)$ to $\Irr_{\U^\sigma\dist}(\U)$
\end{corollary}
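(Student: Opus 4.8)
The plan is to derive the corollary by stacking two orbit parametrizations that are already available and checking that the map $\mathrm{C}$ intertwines them. First I would apply Kirillov's classification (Theorem \ref{theorem K}) to the connected unipotent $F$-group $\U^\sigma=\mathbf{U}^\sigma(F)$, whose Lie algebra is $\lN^\sigma$; this is itself a group to which the theorem applies and it yields a bijection $\U^\sigma\backslash(\lN^\sigma)^*\to\Irr(\U^\sigma)$ sending the orbit of $\phi_\sigma$ to $\pi(\psi_{\phi_\sigma})$. On the other side, Theorem \ref{thm distinction and sel-duality} gives a bijection $\U^\sigma\backslash(\frac{\lN}{\lN^\sigma})^*\to\Irr_{\U^\sigma\dist}(\U)$ sending the orbit of $\phi$ to $\pi(\psi_\phi)$, where $(\frac{\lN}{\lN^\sigma})^*$ is identified with $(\lN^*)^\sigma$ as recalled before the theorem. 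Hence it suffices to show that $\mathrm{C}$ is a $\U^\sigma$-equivariant linear isomorphism from $(\lN^\sigma)^*$ onto $(\frac{\lN}{\lN^\sigma})^*$: it then descends to a bijection of orbit spaces, and composing the three bijections produces exactly $\pi(\psi_{\phi_\sigma})\mapsto\pi(\psi_\phi)$.

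That $\mathrm{C}$ is a linear isomorphism onto $(\lN^*)^\sigma$ is immediate from the decomposition $\lN=\lN^\sigma\oplus\d\lN^\sigma$ coming from $\mathbf{U}=\Res_{E/F}(\mathbf{U}^\sigma)$, with $\lN=\lN^\sigma\otimes_F E$: the form $\phi=\mathrm{C}(\phi_\sigma)$ vanishes on $\lN^\sigma$ and is thus $\sigma$-fixed, and the assignment $\phi_\sigma\mapsto\phi$ is inverted by restricting a $\sigma$-fixed form to $\d\lN^\sigma$ and pulling back along $N'\mapsto\d N'$. The only substantive point is equivariance, and it rests on the observation that for $u\in\U^\sigma$ the adjoint action $\Ad(u)$ on $\lN=\lN^\sigma\otimes_F E$ is $E$-linear, being the adjoint action of the $E$-group $\mathbf{U}^\sigma$ evaluated at an $F$-point. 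Consequently $\Ad(u)$ preserves both $\lN^\sigma$ and $\d\lN^\sigma$ and satisfies $\Ad(u)(\d N')=\d\,\Ad(u)(N')$ for $N'\in\lN^\sigma$. Granting this, the equivariance follows from the direct computation, for $u\in\U^\sigma$ and $N,N'\in\lN^\sigma$,
\[(u\cdot\mathrm{C}(\phi_\sigma))(N+\d N')=\mathrm{C}(\phi_\sigma)\bigl(\Ad(u)^{-1}N+\d\,\Ad(u)^{-1}N'\bigr)=\phi_\sigma(\Ad(u)^{-1}N')=(u\cdot\phi_\sigma)(N'),\]
using that $\mathrm{C}(\phi_\sigma)$ kills $\lN^\sigma$ and sends $\d M$ to $\phi_\sigma(M)$; the right-hand side is $\mathrm{C}(u\cdot\phi_\sigma)(N+\d N')$ by definition of $\mathrm{C}$, so $\mathrm{C}(u\cdot\phi_\sigma)=u\cdot\mathrm{C}(\phi_\sigma)$.

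I expect the main obstacle to be purely bookkeeping: pinning down that the $\U^\sigma$-action on $(\frac{\lN}{\lN^\sigma})^*$ used in Theorem \ref{thm distinction and sel-duality} coincides with the coadjoint action appearing here — both being induced by $\Ad(\U^\sigma)$ on the $\sigma$-anti-invariants $\d\lN^\sigma\cong\lN/\lN^\sigma$ — and justifying the $E$-linearity of $\Ad(u)$. Once these identifications are fixed, $\mathrm{C}$ carries $\U^\sigma$-orbits to $\U^\sigma$-orbits bijectively, and the corollary is a short formal composition of the two established bijections.
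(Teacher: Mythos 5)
Your proposal is correct and follows essentially the same route as the paper, which derives the corollary by composing the Kirillov parametrization of $\Irr(\U^\sigma)$ with the bijection of Theorem \ref{thm distinction and sel-duality} via the identification $\mathrm{C}:(\lN^\sigma)^*\to(\lN^*)^\sigma$. The paper leaves the $\U^\sigma$-equivariance of $\mathrm{C}$ implicit, so your explicit verification (resting on the $E$-linearity of $\Ad(u)$ for $u\in\U^\sigma$) is a welcome but not divergent addition.
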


\textbf{Acknowledgements}. We thank Dipendra Prasad for useful comments on a previous version of this note concerned only with Galois involutions,
and Abderrazak Bouaziz for bringing the papers \cite{B1} and \cite{B2} to our attention, which lead to the actual version of this note. We thank Maarten Solleveld for numerous useful 
comments and for pointing out a mistake in a previous version of Lemma \ref{lemma pro p action with involution}, and Ahmed Moussaoui for his help in finding a reference. We thank Pierre Torasso for pointing out a persisting mistake in Lemma \ref{lemma pro p action with involution}. We thank Martin Andler for clarifying a misunderstanding of the author concerning the Kirillov classification.

\bibliographystyle{plain}
\bibliography{unipotentdistinction}

\end{document}